\newtheorem{theorem}{Theorem}[section]
\newtheorem{proposition}[theorem]{Proposition}
\newtheorem{corollary}[theorem]{Corollary}
\theoremstyle{definition}
\newtheorem{definition}[theorem]{Definition}
\newcommand{\B}{\ensuremath{\mathbf{B}}}
\newcommand{\A}{\ensuremath{\mathbf{A}}}
\newcommand{\Cat}{\ensuremath{\mathsf{Cat}}}
\newcommand{\Set}{\ensuremath{\mathsf{Set}}}
\newcommand{\Grp}{\ensuremath{\mathsf{Grp}}}
\newcommand{\G}{\ensuremath{\mathsf{G}}}
\newcommand{\Grpd}{\ensuremath{\mathsf{Grpd}}}
\newcommand{\C}{\ensuremath{\mathbf{C}}}
\renewcommand{\a}{\alpha}
\renewcommand{\b}{\beta}
\newcommand{\g}{\gamma}
\renewcommand{\l}{\lambda}
\renewcommand{\r}{\rho}
\DeclareMathOperator{\conj}{conj}
\DeclareMathOperator{\dom}{dom}
\DeclareMathOperator{\cod}{cod}
\begin{document}

\title{Internal bicategories in groups}
\author[N. Martins-Ferreira]{Nelson Martins-Ferreira}
\address{School of Technology and Management-ESTG\\
Centre for Rapid and Sustainable Product Development-CDRSP\\
Polytechnic Institute of Leiria\\
P-2411-901, Leiria, Portugal}
\email{martins.ferreira@ipleiria.pt}

\keywords{internal bicategory,  reflexive graph, internal
category, internal  2-category, homotopy path, Mal'tsev variety, $V$-Mal'tsev operation, 2-cell structure, pseudocategory.}

\subjclass[2010]{Primary 08A02; Secondary 20J15} 

\thanks{Thanks are due to A. Montoli and M. Sobral for many helpful comments and suggestions on the text. This research was supported by the
FCT post doctoral grant SFRH/BPD/43216/2008 at CMUC, by the FCT projects PTDC/\-MAT/120222/2010
and PTDC/EME-CRO/120585/2010, and also by CDRSP and ESTG from the Polytechnic Institute of Leiria.}

\begin{abstract}
A detailed description of internal bicategory in the category of groups is derived from the general description of internal bicategories in weakly Mal'tsev sesquicategories. The example of bicategory of paths in a topological abelian group is presented, as well as
a description of internal bicategories in weakly Mal'tsev categories with a $V$-Mal'tsev operation in the sense of Pedicchio.
\end{abstract}

\maketitle

\section{Introduction}

The purpose of this note is to give a detailed description of internal
bicategories (with the restriction $1_a\otimes 1_a=1_a$, instead of the usual condition $1_a\otimes 1_a\cong1_a$) in the category of groups.
 An example that illustrates the structure in the simpler case of abelian groups is given in section 4. An interesting characteristic of internal bicategories in groups is the fact that the associativity isomorphism ${a\otimes(b\otimes c)\cong (a\otimes b)\otimes c}$ is completely determined by the left and right isomorphisms for the composition with pseudo-identities.

For the purpose of this note, an internal bicategory \cite{Benabou:Bicategories} is best described
 as a particular instance of a
pseudocategory \cite{NMF-PsCat} as follows: if $\B$ is an  arbitrary category
with pullbacks and $\Cat(\B)$ is the 2-category of internal
categories in $\B$ then an internal bicategory in $\B$ is a pseudocategory in $\Cat(\B)$ such that the \emph{object of objects} is a discrete internal  category.

Pseudocategories internal to a weakly Mal'tsev category with a weakly Mal'tsev cartesian 2-cell structure were characterized in \cite{NMF-PhD} (see also Theorem 6.2 in \cite{NMF-2cells}). If $\B$ is a weakly Mal'tsev category then so it is the category of internal categories in $\B$, $\Cat(\B)$, which, in addition, has a canonical 2-cell structure given by the internal natural transformations that makes $\Cat(\B)$ a weakly Mal'tsev category with a weakly Mal'tsev cartesian 2-cell structure. 
It is well known that the category $\Grp$, of groups and group homomorphisms, is a (weakly) Mal'tsev category and consequently $\Cat(\Grp)$, which is equivalent to the category of crossed-modules, with homotopies as 2-cells (see e.g. \cite{NMF-2cells}), is an example of a weakly Mal'tsev category with a weakly Mal'tsev cartesian 2-cell structure.

A pseudocategory  in a
2-category $\C$  can be seen as an internal structure in $\C$ of the form 
\[\xymatrix{C_2
\ar@<2.5ex>[r]^{\pi_2}\ar@<-2.5ex>[r]_{\pi_1}\ar@<0ex>[r]|{m} & C_1
\ar@<1ex>[r]^{d}\ar@<-1ex>[r]_{c}\ar@<-1.5ex>[l]|{e_2}\ar@<1.5ex>[l]|{e_1}
& C_0 \ar@<0ex>[l]|{e} }\] in which $C_2$ is defined together with
$\pi_1$ and $\pi_2$ as the pullback  of $d$ and $c$, the morphisms $e_1$ and $e_2$ are uniquely determined as $e_1=\langle
1,ed\rangle$ and $e_2=\langle ec,1\rangle$, the conditions  $de=1_{C_0}=ce$,
$dm=d\pi_2$ and $cm=c\pi_1$ hold, while the usual axioms that regulate
associativity  and identity of composition, namely $m(1_{C_1}\times_{C_0} m)=m(m\times_{C_0} 1_{C1})$ and $me_2=1_{C_1}=me_1$, are replaced by the
existence of appropriate (iso) 2-cells 
\[\xymatrix{ m(1\times m) \ar@{=>}[rr]^{\alpha} && m(m\times 1) \\
me_2 & 1_{C_1} \ar@{=>}[l]_{\lambda} \ar@{=>}[r]^{\rho} & me_1}\]
with $d\lambda=d=d\rho$, $c\lambda=c=c\rho$,  $d\alpha=d\pi_3$,
$c\alpha=c\pi_1$, $\lambda e=\rho e$, and furthermore they are required to satisfy coherence conditions (see e.g. \cite{NMF-PsCat,NMF-2cells} for further details).

In particular if $\C=\Cat(\B)$ for some category $\B$, and if $C_0$ in the diagram above is a
discrete internal category, then the resulting notion is precisely that of
an internal bicategory in  $\B$.

We give an explicit description of internal bicategories in the category of groups, $\Grp$, subject to two conditions: the  object $C_1$ is an internal
groupoid (which corresponds to the requirement that the \emph{2-cells} in
the bicategoy are invertible) and $\lambda
e=e=\rho e$ is satisfied (which means that
$1_a\otimes 1_a=1_a$ for every identity arrow $1_a$). The first condition is not a
restriction when $\B$ is the category of groups since every internal category there is the same
as an internal groupoid. The
second condition is considered for instance in
\cite{Grandis-Pare} and it holds for the bicategory of paths in an arbitrary topological space.

In the last two sections we explore the more general context of a weakly Mal'tsev category with a $V$-Mal'tsev operation in the sense of Pedicchio \cite{Pedicchio}.  In \cite{Pedicchio} it is shown that if $V\colon{\A\to\B}$ is conservative and there exists a $V$-Mal'tsev operation $p\colon{V^3\to V}$ then $\A$ is a  Mal'tsev category. If we take the functor $V$ to be faithful but not necessarily conservative then $\A$ is a weakly Mal'tsev category.  In this context it is no longer true, in general, that internal categories and internal groupoids coincide, hence we have to restrict our study to the category $\Grpd(\A)$ of internal groupoids in $\A$. However, instead of $\Grpd(\A)$ we work with an equivalent category, denoted by $\G(\A)^{*}$, whose objects are the triples $(A,s,t)$ with $A$ an object in $\A$ and $s,t\colon{A\to A}$ two endomorphisms of $A$ with $st=t$, $ts=s$ and such that the property (P1) is satisfied (see Section \ref{sec:5}).

 Finally, we consider the category $\G(\A)^{*}$, equipped with the canonical 2-cell structure which corresponds to the internal natural transformations in $\Grpd(\A)$ and characterize the internal pseudocategories in $\Grpd(\A)$. As a consequence we obtain a characterization for the internal bicategories in $\A$ which have invertible 2-cells and satisfy the homogeneity condition $1_a\otimes 1_a=1_a$ as explained before. We remark that the description obtained cannot be improved, in general, though, in the case of groups, a relevant simplification arises because the property (P1) is equivalent to the following (simpler) condition:
\begin{enumerate}
\item[(P1')] there exists a morphism $\sigma\colon{A\to A}$ for which $V(\sigma)=p_A\langle V(s), 1, V(t)\rangle$.
\end{enumerate}
This equivalence comes from a coincidence of commutators (see e.g. \cite{MF+VdL:14online}).

\section{Internal bicategories in $\Grp$}

In the category of groups,  Theorem~6.2 in \cite{NMF-2cells}, which is proved in \cite{NMF-PhD}, gives
the result we state next.

Every internal bicategory (with $1_a\otimes 1_a=1_a$, as it is the
case for the bicategory of paths in a space) in the category of
groups is completely determined, up to isomorphism, by the
following data
\begin{equation}\label{d.st-lr-dec}
\xymatrix{ X   \ar@<0ex>[r]^{h}  \ar@(ld,d)[]_{ \lambda,\rho }
\ar@(lu,u)[]^{ s,t }
 & B  },\quad  \xi\colon{B\times X\to X}
\end{equation}
with $X$ and $B$ groups, $h,s,t,\lambda,\rho$ group  homomorphisms
and $\xi$ a set theoretical map satisfying the following
conditions:
\begin{enumerate}
\item [(i)] $\xi$ represents an (external) action of the multiplicative group $B$ on the additive (but not necessarily abelian) group $X$ , written as $\xi(b,x)=b\cdot x$;
\item [(ii)] $h(b\cdot x)=bh(x)b^{-1}$;
\item[(iii)] $st=t$, $ts=s$;
\item[(iv)] $hs=h=ht$;
\item[(v)] $h\lambda=h=h\rho$;
\item[(vi)] $\l-\l s+t\l s=1_X=\r-\r s+t\r s$;
\item[(vii)] $[\ker (s),\ker (t)]=0$;
\item[(viii)] the two maps \[m\colon{X^{2}\times B\to X\times B}\longleftarrow X^{3}\times B\colon{\alpha}\] defined by
\begin{eqnarray}
m(x,x',b)=(u(x)+v(x'),b) \nonumber\\
%\mu(x,x',b)=(\a_2(x)+\a_3(x'),b) \nonumber\\
\alpha(x,x',x'',b)=(\a_1(x)+\a_2(x')+\a_3(x''),b)
\end{eqnarray}
with
\begin{eqnarray}
u=s\r t-\r t+ \r \label{eq: u}\\
v=s\l t-\l t+\l \label{eq: v}\\
\a_1=s\r\r t-\r s\r t+ s\r t-\r t+\r  \label{eq: a1}\\
\a_2=s\r \l - \r s \l t + s \r t - \r t + \r \label{eq: a2}\\
\a_3= \l s \l t - \l \l t+ \l\l \label{eq: a3}
\end{eqnarray}
are homomorphisms between iterated semidirect products
\[X\rtimes_{\xi_1}(X\rtimes_{\xi}B)\to X\rtimes_{\xi}B
\longleftarrow X\rtimes_{\xi_2}(X\rtimes_{\xi_1}(X\rtimes_{\xi}B))\]
with $\xi_1((x',b),x)=(h(x')b)\cdot x$ and
$\xi_2((x',x'',b),x)=(h(x'x'')b)\cdot x$.
\end{enumerate}

Some useful remarks and observations:
\begin{enumerate}
\item  On item (i), by an (external) action, we mean that $\xi$ may also be seen as a family of group homomorphisms $(\xi_b\colon{X\to X})_{b\in B}$, indexed by the elements $b\in B$, such that $\xi_b\xi_{b'}=\xi_{bb'}$ and $\xi_{1}=1_X$. More explicitly, considering $B$ with the multiplicative notation, $X$ with the additive notation and writing $\xi_b(x)=b\cdot x$, we have:
\begin{enumerate}
\item $1\cdot x=x$
\item $(bb')\cdot x=b\cdot(b\cdot x)$
%\item $b\cdot 0=0$
\item $b\cdot(x+y)=b\cdot x + b\cdot y$
\end{enumerate}
\item On item (ii), if we let $\conj_b\colon{B\to B}$ to be the conjugation action, which associates  $byb^{-1}$ to every $y\in B$, then this condition precisely says that \[h\xi_b=\conj_bh, \quad \text{for all }b\in B.\]
\item On item (vi) there is an obvious abuse of notation: it clearly means \[\l(x)-\l s(x)+t\l s(x)=x=\r(x)-\r s(x)+t\r s(x)\]for every $x\in X$. The same is true for conditions $(\ref{eq: u})$--$(\ref{eq: a3})$.
\item Finally, the condition of item (vii) is saying that the elements of $\ker(s)$ commute with the ones of $\ker(t)$ (see \cite{MacLane}, p.285 and followings).
\end{enumerate}

Remark that if $s=t=\r=\l=1_X$ then the resulting structure is precisely the one of a crossed module (see e.g. \cite{MacLane}, p.285). Indeed, in this case, $m$ is 
a homomorphism if
and
only if $x+x'-x=h(x)\cdot x'$, for all $x,x'\in X$, while the other
conditions trivialize.

The fact that every internal bicategory in the category of groups has, up to an isomorphism of structures, the form presented above is a consequence of Theorem 6.2 as stated in \cite{NMF-2cells} whose proof can be found in \cite{NMF-PhD}. In the following section we give a more explicit description of internal bicategories in groups.

\section{An extended description}

In order to compare the  previous description of an internal bicategory in groups with
the well-known description of internal  2-category in groups, we rewrite conditions (iii), (iv) and (vii) using the fact that, up to isomorphism,
$X\simeq Y\rtimes_\phi Z$. From this point of view, $$s,t\colon{Y\rtimes_\phi Z\to
Y\rtimes_\phi Z}$$  are given  by $s(y,z)=(0,z)$ and
$t(y,z)=(0,\partial_{1}(y)z)$ with  $\partial_{1}\colon{Y\to Z}$ a homomorphism and 
$\phi\colon{Z\times Y\to Y}$ a map being part of a crossed  module
structure.

Conditions (iv) to (vi) now read as
\[h\simeq\left(
    \begin{array}{cc}
      0 & \partial_0
    \end{array}
  \right),\quad \l\simeq\left(
    \begin{array}{cc}
      1 & \l_0 \\
      0 & 1-\partial_{1}\l_0 \\
    \end{array}\right),\quad \r\simeq\left(
    \begin{array}{cc}
      1 & \r_0 \\
      0 & 1-\partial_{1}\r_0 \\
    \end{array}
  \right),
\]
and the  basic structure is transformed into
\begin{equation}\label{diag-1}
\xymatrix{Y \ar[r]^{\partial_{1}} & Z  \ar[r]^{\partial_{0}} \ar@/_1.5pc/[l]_{\l_0}
\ar@/^1.5pc/[l]^{\r_0} & B},\quad \partial_{0}\partial_{1}=0,
\end{equation}
 which is now easily compared with the one of an internal 2-category (taking $\lambda_0=\rho_0=0$). In the literature some other generalizations of the notion of internal 2-category are considered whose main motivations come from homotopy theory.  For instance in \cite{Akca} the authors introduce the notion of pseudo 2-crossed module relating it with pseudo simplicial groups.

Having a basic structure such as (\ref{diag-1}) we now show how to recover, up to isomorphism, the original structure of internal bicategory (assuming that we use multiplicative notation for  $B$ and $Z$  while $Y$ is written additively, although it is not necessarily commutative):
\begin{enumerate}
\item the objects of the bicategory are the elements in $B$;
\item the  morphisms of the bicategory are the pairs $(z,b)\in Z\times B$ with domain and codomain as illustrated\[\xymatrix{\partial_0(z)b&b\ar[l]_(.35){(z,b)}};\]
\item the 2-cells of the bicategory are the triples $(y,z,b)\in Y\times Z\times B$, which can be
displayed as
\[\xymatrix{
\ar@{}[r]|(1){\partial_{0}(z)b}& &&
\ar@<1ex>@/^1pc/[ll]^{(\partial_{1}(y)z,b)}="2"
\ar@<-1ex>@/_1pc/[ll]_{(z,b) }="4"     \ar@{}[r]|(0){b} & \\
\ar@{=>}|{(y,z,b) } "4";"2"}\] thus illustrating their source and target associated morphisms;
\item the vertical composition is simply obtained by the formula \[(y',\partial_1(y)z,b)\circ(y,z,b)=(y'+y,z,b),\]
as illustrated,
\[\xymatrix{
\ar@{}[r]|(.6){\partial_{0}(z)b}& &&
\ar@<0ex>@/^0pc/[ll]|{(\partial_{1}(y)z,b)}="2"
\ar@<1.5ex>@/^2pc/[ll]^{(\partial_{1}(y')z',b)}="6"
\ar@<-1.5ex>@/_2pc/[ll]_{(z,b) }="4"     \ar@{}[r]|(0){b} & \\
\ar@{=>}|{(y,z,b) } "4";"2"
\ar@{=>}|{(y',z',b) } "2";"6"}\] which is well defined because $z'=\partial_1(y)z$ and $\partial_{1}(y'+y)z=\partial_{1}(y')z'$;
\item the tensor (or horizontal) composition
\[\xymatrix{
\ar@{}[r]|(.9){\partial_{0}(z')b'}& &&
\ar@<1ex>@/^1pc/[ll]^{(\partial_{1}(y')z',b')}="2"
\ar@<-1ex>@/_1pc/[ll]_{(z',b') }="4"     \ar@{}[r]|(0.1){b'=} & \\
\ar@{=>}|{(y',z',b') } "4";"2"}\hspace{-1.5cm}\xymatrix{
\ar@{}[r]|(1){\partial_{0}(z)b}& &&
\ar@<1ex>@/^1pc/[ll]^{(\partial_{1}(y)z,b)}="2"
\ar@<-1ex>@/_1pc/[ll]_{(z,b) }="4"     \ar@{}[r]|(0){b} & \\
\ar@{=>}|{(y,z,b) } "4";"2"}\]
is defined by the
formula
\begin{eqnarray*}
    (y',z',\partial_{0}(z)b)\otimes (y,z,b)&=&(u(y',z')+v(y,z),b) %m((y',z'),(y,z),b)\\
%&=&(u(y',z')+v(y,z),b)
%&=&((-\r_0\partial_1(y')+y',z'-\partial_1\r_0(z'))+(-\l_0\partial_1(y)+y,z-\partial_1\r_0(z)),b)\\
\end{eqnarray*}
where $u(y',z')+v(y,z)$ is the element in $X\simeq Y\rtimes_\phi Z$, obtained from $(\ref{eq: u})$ and $(\ref{eq: v})$, which means that it is of the form \[(-\r_0\partial_1(y')+y'+\phi(z'-\partial_1\r_0(z'),-\l_0\partial_1(y)+y), z'-\partial_1\r_0(z')+z-\partial_1\l_0(z));\]
in particular it follows that $(z',b')\otimes(z,b)$, with $b'=\partial_0(z)b$, is the pair \[(z'-\partial_1\r_0(z')+z-\partial_1\l_0(z),b)\in Z\times B;\]
%\begin{eqnarray}\lefteqn{x+ iy=}\\ & & a + b + c + d + e + f + g + h + i + j + k +\nonumber\\ & & l + m \nonumber\end{eqnarray}
\item Finally, the coherence 2-cells are obtained as follows:
\begin{enumerate}
\item the left and right coherence isomorphism 2-cells, respectively $l(z,b)$ and $r(z,b)$, for the composition with  pseudo-identity morphisms, are given by:
\[\xymatrix{
\ar@{}[r]|(0.6){\partial_0(z)b}& & &&& \ar[llll]|{(z,b)}="3"
\ar@<1ex>@/^2pc/[llll]^{(0,\partial_{0}(z)b)\otimes (z,b)}="2"
\ar@<-1ex>@/_2pc/[llll]_{(z,b)\otimes (0,b) }="4" \ar@{}[r]|(0){b} & \\
\ar@{=>}^{l(z,b) } "2";"3" \ar@{=>}^{r(z,b) } "4";"3"}\]
\begin{align*}
l(z,b)=&(\l_0(z),z-\partial_{1}\l_0(z),b),\\
r(z,b)=&(\r_0(z),z-\partial_{1}\r_0(z),b);
\end{align*}
\item the associativity coherence 2-cell isomorphism $a(z'',z',z,b)$, as displayed,\[\xymatrix{
\ar@{}[r]|(0.6){\partial_0(z'')b''}& & &&&
%\ar[llll]|{(z,b)}="3"
\ar@<-1ex>@/_1pc/[llll]_{(z'',b'')\otimes((z',b')\otimes(z,b))}="2"
\ar@<1ex>@/^1pc/[llll]^{((z'',b'')\otimes(z',b'))\otimes(z,b) }="4" \ar@{}[r]|(0){b} & \\
%\ar@{=>}^{l(z,b) } "2";"3"
\ar@{=>}|{a(z'',z',z,b) } "2";"4"}\]
is given by the formula
\begin{align*}
a(z'',z',z,b)=&(\a_1(0,z'')+\a_2(0,z')+\a_3(0,z), \bar{z} ,b)
\end{align*}
where $\bar{z}=(z'',b'')\otimes((z',b')\otimes(z,b))$ while the $\alpha_i$, $i=1,2,3$, are  obtained from the equations  $(\ref{eq: a1})$ to $(\ref{eq: a3})$ with
\begin{eqnarray}
s(y,z)&=&(0,z)\nonumber\\
t(y,z)&=&(0,\partial_1(y)z)\nonumber\\
\lambda(y,z)&=&(y+\lambda_{0}(z),z-\partial_{1}\lambda_{0}(z))\nonumber\\
\rho(y,z)&=&(y+\rho_{0}(z),z-\partial_{1}\rho_{0}(z))\nonumber
\end{eqnarray}
as already remarked.
\end{enumerate}
\end{enumerate}

In the case when $\l_0=\r_0=0$, a structure equivalent to the one of a
2-crossed module of groups  is obtained \cite{Conduche}.

\section{Example}

If $B$ is a topological abelian group then we can always construct an internal bicategory structure over it which  coincides, up to homotopy, with the bicategory of paths in $B$. Using the same notation as in the previous section, we define $Z$ as the set of continuous maps
\[z\colon{[0,1] \to B}\] such that $z(0)=0$, with the componentwise addition \[(z+z')(t)=z(t)+z'(t),\] and, similarly, we let $Y$ to be the set of continuous maps
\[y\colon{[0,1]\times [0,1] \to B}\] such that $y(0,t)=0$ and
$y(s,1)=y(s,0)=0$ for every $s,t\in [0,1]$.

The homomorphisms displayed in diagram (\ref{diag-1}) are the following ones:
\begin{eqnarray*}
\partial_0(z)&=&z(1)\\
\partial_1(y)(t)&=&y(1,t)\\
\rho_0(z)(s,t)&=&\left\lbrace\begin{matrix}
z(t)& \text{if} & 2t\leq s\\
z(t)-z\left(\frac{2t-s}{2-s}\right)&\text{if}&2t>s
\end{matrix}\right.\\
\lambda_0(z)(s,t)&=&\left\lbrace\begin{matrix}
z\left(\frac{st}{2-s}\right)-z\left(\frac{2t}{2-s}\right)&\text{if}&2t\leq s\\
z\left(\frac{st}{2-s}\right)-z\left(\frac{s}{2-s}\right)&\text{if}&2t>s
\end{matrix}\right.\\
\end{eqnarray*}

Finally, we observe that all the actions involved are trivial and the formula for horizontal composition of paths, obtained from item 5 of the previous section, \[(z',b')\otimes (z,b)=(z'-\partial_1\rho_0(z')+z-\partial_1\lambda_0(z),b)\] with $b'=\partial_0(z)+b$, gives precisely the usual formula for the concatenation of paths \[(z'(t)+b')\otimes(z(t)+b)=\left\lbrace\begin{matrix}
z(2t)+b&\text{if}&2t\leq 1\\
z'(2t-1)+z(1)+b&\text{if}&2t>1
\end{matrix}\right.\]in which the pair $(z,b)$ is identified with the path $z(t)+b$.

We point out that the associativity isomorphism $\a$, defined at item~6 of the previous section, is completely determined by the pseudo-identity isomorphisms $\lambda$ and $\rho$. This is a major characteristic for every structure of an internal bicategory in groups and, more generally, in any (weakly) Mal'tsev category \cite{NMF-PhD}. In the following sections we work towards a characterization of internal bicategory in a weakly Mal'tsev category with a $V$-Mal'tsev operation. The result is an improvement of Theorem 6.2 in \cite{NMF-2cells}. Indeed, the $V$-Mal'tsev operation allows us to write  the admissibility condition (in the sense of \cite{NMF-PhD}) via an explicit formula. This is not possible for arbitrary weakly Mal'tsev categories and it is similar to the case of Mal'tsev varieties of universal algebra \cite{Janelidzepreprint}.

\section{Internal groupoids in weakly Mal'tsev categories with a $V$-Mal'tsev operation}\label{sec:5}

In this section we use the notion of Mal'tsev operator introduced in \cite{Pedicchio} and obtain a large class of examples of weakly Mal'tsev categories which are not necessarily Mal'tsev (see also \cite{MF:14online}).

Let $\A$ and $\B$ be two categories with finite limits and $V\colon{\A\to \B}$ a functor that preserves finite limits.

\begin{definition}[Pedicchio,\cite{Pedicchio}]
A $V$-Mal'tsev operation on $\A$ is a  natural transformation $p:V^3\to V$ whose components are morphisms in $\B$ of the form \[p_A\colon{V(A)\times V(A)\times V(A)\to V(A)}\] such that \begin{equation}\label{cond:mal'tsev}p_A\langle x,y,y\rangle=x=p_A\langle y,y,x\rangle\end{equation} for all $B$ in $\B$ and $x,y\colon{B\to V(A)}$.
\end{definition}

\begin{proposition}[Pedicchio,\cite{Pedicchio}] If $V$ is conservative  and $\A$ admits  a $V$-Mal'tsev operation then  $\A$ is a Mal'tsev category.
\end{proposition}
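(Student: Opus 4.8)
The plan is to recall the standard characterization of Mal'tsev categories via the existence of an internal Mal'tsev operation on each object — or, equivalently, via the fact that every internal reflexive relation is an internal equivalence relation — and to transport the given $V$-Mal'tsev operation along the conservative functor $V$ to produce such internal structure in $\A$. Concretely, I would argue as follows. Let $R \hookrightarrow A \times A$ be an internal reflexive relation in $\A$. Since $V$ preserves finite limits, $V(R) \hookrightarrow V(A) \times V(A)$ is a reflexive relation in $\B$; but it need not a priori be an equivalence relation in $\B$ either, so instead I would work directly with the relation $R$ itself and the operation $p$.

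First, I would show that the naturality of $p \colon V^3 \to V$ forces the component $p_A$ to restrict appropriately to subobjects arising as limits. The key point: given any subobject $m \colon S \rightarrowtail V(A)^n$ that is the $V$-image of a subobject of $A^n$ (in particular any ``relation'' defined by equalizers), naturality of $p$ across the relevant projection and diagonal maps shows $p_A$ carries tuples of elements of $S$ into $S$. Applying this to $R \subseteq A \times A$ viewed inside $A \times A$: the three ``columns'' of a configuration of elements of $R$ get sent by $p_A \times p_A$ (applied componentwise) back into $V(R)$ because $p$ is natural with respect to the inclusion $R \rightarrowtail A \times A$ and the two projections $A \times A \to A$. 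This yields a morphism $V(R) \times_{V(A)} V(R) \times_{V(A)} V(R) \to V(R)$ in $\B$, and more to the point a candidate internal Mal'tsev operation $V(A)^3 \to V(A)$ that is compatible with $R$.

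Next, to get that $R$ is an equivalence relation \emph{in $\A$} — not merely that its $V$-image is well-behaved in $\B$ — I would use conservativity. The standard argument (due to Carboni–Kelly–Pedicchio, and the route Pedicchio takes) is: transitivity of $R$ amounts to a certain square being a limit in $\A$, or to a certain comparison morphism being an isomorphism. One builds, using $p_A$, a morphism in $\B$ that would witness transitivity/symmetry of $V(R)$, checks by a diagram chase (using \eqref{cond:mal'tsev} and naturality) that it has the required universal property \emph{in $\B$}, hence that the comparison morphism in $\B$ is an isomorphism; then since $V$ preserves the finite limits in question and $V$ is conservative, the corresponding comparison morphism in $\A$ is an isomorphism too, so $R$ is an equivalence relation in $\A$. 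Since $\A$ has finite limits and every internal reflexive relation is an equivalence relation, $\A$ is Mal'tsev.

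The main obstacle I expect is precisely the step of descending a property from $\B$ to $\A$: conservativity only lets one conclude that a morphism known \emph{a priori} to exist in $\A$ is an isomorphism once its $V$-image is, so one must be careful to first produce the relevant comparison morphisms and diagrams \emph{internally in $\A$} (using only finite limits, which $V$ preserves), and only then invoke $V$ to check the isomorphism condition. A secondary subtlety is verifying that the internal operation built from $p_A$ genuinely restricts to $R$; this is where naturality of $p$ with respect to the (monic) structure maps of the relation does the work, and one should check the naturality squares are the ones that $V$'s limit-preservation makes available.
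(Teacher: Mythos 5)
The paper does not actually prove this proposition: it is quoted from Pedicchio's article \cite{Pedicchio} and used as a black box, so there is no in-text argument to measure yours against. That said, your outline is essentially Pedicchio's own proof and the plan is sound: reduce to showing that every internal reflexive relation $\langle r_1,r_2\rangle\colon{R\rightarrowtail A\times A}$ is an equivalence relation; express symmetry and transitivity as the invertibility of a comparison monomorphism constructed purely from finite limits in $\A$ (the pullback of $r$ along the twist, respectively along $\langle r_1q_1,r_2q_2\rangle\colon{R\times_A R\to A\times A}$); split the $V$-image of that monomorphism in $\B$; and conclude by conservativity. Two points are worth tightening. First, the closure of $V(R)$ under the operation is not a general fact about subobjects of $V(A)^n$ that happen to be $V$-images; it is exactly naturality of $p$ at the morphisms $r_1,r_2\colon{R\to A}$ of $\A$, together with $V(r)$ being monic (because $V$ preserves finite limits), so the operation one actually uses is the component $p_R$ itself rather than a restriction of $p_A\times p_A$. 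Second, the witness in $\B$ should be written explicitly, e.g.\ $p_{R}\langle V(q_1),V(er_2q_1),V(q_2)\rangle$ for transitivity with $e$ the reflexivity section: the identities \eqref{cond:mal'tsev} show it composes with $V(r)$ to $V\langle r_1q_1,r_2q_2\rangle$, hence it furnishes a section of the pulled-back monomorphism in $\B$; a split monomorphism is an isomorphism, and conservativity transports this back to $\A$. This is precisely the division of labour you describe in your final paragraph, so the argument goes through; note also that only the iso-reflecting half of conservativity is needed here, faithfulness being what the paper uses for the weakly Mal'tsev variant proved immediately afterwards.
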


Recall that a functor is said to be conservative if it is faithful and reflects isomorphisms. The author is grateful to George Janelidze for the inspiration that led to the following result.

\begin{proposition} If $V$ is faithful and $\A$ admits a $V$-Mal'tsev operation then $\A$ is a weakly Mal'tsev category.
\end{proposition}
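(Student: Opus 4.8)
The plan is to verify the defining property of a weakly Mal'tsev category directly, namely that in every pullback of a pair of split epimorphisms the two canonical inclusions into the pullback form a jointly epimorphic pair. First I would fix split epimorphisms $f\colon{A\to C}$ and $g\colon{B\to C}$ in $\A$ with sections $s,t$, form the pullback $P=A\times_C B$ with projections $\pi_1,\pi_2$, and let $e_1=\langle 1_A,tf\rangle\colon{A\to P}$ and $e_2=\langle sg,1_B\rangle\colon{B\to P}$ be the induced inclusions. It then remains to prove that $ue_1=ve_1$ and $ue_2=ve_2$ force $u=v$, for arbitrary $u,v\colon{P\to D}$ in $\A$; since $V$ is faithful, it is enough to show $V(u)=V(v)$.

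Because $V$ preserves finite limits, $V(P)$ is the pullback $V(A)\times_{V(C)}V(B)$ with projections $V(\pi_1),V(\pi_2)$, the maps $V(f),V(g)$ are split epimorphisms with sections $V(s),V(t)$, and $V(e_1)=\langle 1,V(t)V(f)\rangle$, $V(e_2)=\langle V(s)V(g),1\rangle$. The core of the argument is to exhibit a decomposition of the identity on $V(P)$ through $V(e_1)$ and $V(e_2)$ by means of the Mal'tsev operation $p$. Naturality of $p$ with respect to $\pi_1$ and $\pi_2$ gives that $p_P$ is computed componentwise on $V(P)$, i.e. $V(\pi_1)p_P=p_A(V(\pi_1)\times V(\pi_1)\times V(\pi_1))$ and $V(\pi_2)p_P=p_B(V(\pi_2)\times V(\pi_2)\times V(\pi_2))$. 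Using this, together with $\pi_1e_1=1_A$, $\pi_2e_2=1_B$, $fs=1_C$, $gt=1_C$, $f\pi_1=g\pi_2$, and the Mal'tsev identities $(\ref{cond:mal'tsev})$ for $p_A$ and $p_B$, I would check that
\[
p_P\circ\langle\, V(e_1)V(\pi_1),\ V(e_1)V(s)V(f)V(\pi_1),\ V(e_2)V(\pi_2)\,\rangle \;=\; 1_{V(P)},
\]
the verification being done one component at a time: on the $V(A)$-component it collapses by $p_A\langle x,y,y\rangle=x$, and on the $V(B)$-component by $p_B\langle y,y,x\rangle=x$.

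To finish, precompose $V(u)$ with the above expression for $1_{V(P)}$ and move $V(u)$ inside $p_P$ using naturality of $p$ with respect to $u\colon{P\to D}$, obtaining
\[
V(u)=p_D\circ\langle\, V(ue_1)V(\pi_1),\ V(ue_1)V(s)V(f)V(\pi_1),\ V(ue_2)V(\pi_2)\,\rangle,
\]
and the same formula for $V(v)$ with $u$ replaced by $v$. Since $ue_1=ve_1$ and $ue_2=ve_2$, the right-hand sides coincide, hence $V(u)=V(v)$ and so $u=v$; this shows that $(e_1,e_2)$ is jointly epimorphic and therefore that $\A$ is weakly Mal'tsev. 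I expect the only somewhat delicate point to be the bookkeeping in the middle paragraph — confirming that $p_P$ acts componentwise on the pullback and that each component is simplified by the correct one of the two Mal'tsev identities — while the rest of the argument is purely formal. (If $V$ were moreover conservative, the same decomposition can be pushed to yield joint \emph{strong} epimorphy, recovering Pedicchio's result; faithfulness alone gives exactly joint epimorphy.)
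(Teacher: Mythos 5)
Your proof is correct and follows essentially the same route as the paper: both arguments reduce to the decomposition $V(u)=p_D\langle V(ue_1\pi_1),\,V(ue_1sf\pi_1),\,V(ue_2\pi_2)\rangle$, which the paper derives by manipulating generalized elements $V\varphi(a,c)$ and you obtain by first exhibiting $1_{V(P)}$ as $p_P$ applied to a triple factoring through $V(e_1)$ and $V(e_2)$ and then precomposing with $V(u)$. The componentwise verification via the two Mal'tsev identities and the final appeal to faithfulness match the paper's argument exactly.
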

\begin{proof}
The proof uses essentially the same arguments as in \cite{MF:14online}, Section 2.
We have to prove that for every diagram in $\A$ of the form
\[\xymatrix{ A \ar@<.5ex>[r]^{f} & B \ar@<.5ex>[l]^{r}
\ar@<-.5ex>[r]_{s} & C \ar@<-.5ex>[l]_{g} }\] with $fr=1_B=gs$, the
induced morphisms $e_1=\langle 1_A,sf\rangle$ and $e_2=\langle
rg,1_C\rangle$ into the pullback
\begin{equation}\label{pullback01}
\xymatrix{{A\times_B C}\ar[r]^(.65){\pi_2}\ar[d]_{\pi_1}&{C}\ar[d]^{g}\\{A}\ar[r]^{f}&{B}}
\end{equation}
are jointly
epimorphic. In other words, given two morphisms in $\A$, \[\varphi,\varphi'\colon{A\times_B A \to D}\] such that $\varphi e_1=\varphi'e_1$ and $\varphi
e_2=\varphi' e_2$, we have to prove that $\varphi=\varphi'$.  Since $V$ is faithful it is sufficient to
 prove $V(\varphi)=V(\varphi')$.
 Indeed, in terms of generalized elements,
 \begin{eqnarray*}
V\varphi(a,c)&=&p_D(V\varphi(a,c),V\varphi(rf(a),sg(c)),V\varphi(rf(a),sg(c)))\\
&=& V\varphi(p_A(a,rf(a),rf(a)),p_C(c,sg(c),sg(c)))\\
&=& V\varphi(p_A(a,rf(a),rf(a)),p_C(sg(c),sg(c),c))\\
&=& p_D(V\varphi(a,sg(c)),V\varphi(rf(a),sg(c)),V\varphi(rf(a),c))\\
&=& p_D(V\varphi(a,sf(a)),V\varphi(rf(a),sf(a)),V\varphi(rg(c),c))\\
&=& p_D(V\varphi e_1(a), V\varphi e_1 rf(a), V\varphi e_2 (c)) \\
&=& p_D(V\varphi' e_1(a), V\varphi' e_1 r f(a), V\varphi' e_2 (c))
 \end{eqnarray*}
 and hence $V\varphi=V\varphi'$, so that $(e_1,e_2)$ is a jointly
 epimorphic pair of morphisms.

\end{proof}

Note that we sometimes omit unnecessary parenthesis, so for example $V\varphi(a,c)$ means $V(\varphi(a,c))$.

From now on we assume that $V$ is a faithful functor which preserves finite limits and $\A$ admits a Mal'tsev operation $p\colon{V^3\to V}$.

A useful corollary that will be needed later on is:

\begin{corollary} For every diagram in $\A$ of the form
\[\xymatrix{ A \ar@<.5ex>[r]^{f} \ar[rd]_{\a} & B \ar[d]^{\b} \ar@<.5ex>[l]^{r}
\ar@<-.5ex>[r]_{s} & C \ar[ld]^{\g} \ar@<-.5ex>[l]_{g} \\ & D }\]
with $fr=1_B=gs$ and $\a r=\b=\g s$, there exists at most one
morphism $\varphi\colon{A\times_B C\to D}$ such that \[\varphi
e_1=\a \hbox{ and } \varphi e_2=\g.\] Furthermore, when such a
morphism exists, it is such that \[V(\varphi)=p_D\langle
V(\a\pi_1),V(\b f\pi_1),V(\g\pi_2)\rangle,
\] where $\pi_1$ and $\pi_2$ are the canonical projections in the pullback diagram (\ref{pullback01}).
\end{corollary}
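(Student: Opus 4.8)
The plan is to derive both assertions from the preceding proposition and the element-wise computation used in its proof. For the uniqueness ("at most one") part I would simply invoke that $(e_1,e_2)$ is a jointly epimorphic pair: if $\varphi,\varphi'\colon A\times_B C\to D$ both satisfy $\varphi e_1=\a=\varphi' e_1$ and $\varphi e_2=\g=\varphi' e_2$, then $\varphi$ and $\varphi'$ become equal after precomposition with $e_1$ and with $e_2$, so $\varphi=\varphi'$. The compatibility hypothesis $\a r=\b=\g s$ is exactly what is needed for $\a$ and $\g$ to be candidates for the restrictions of a common morphism; it plays no role in uniqueness but is essential for the formula.

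For the explicit formula, assume such a $\varphi$ exists. Since the claim is about $V(\varphi)$ itself, it suffices to evaluate $V(\varphi)$ on generalized elements. Write $(a,c)$ for a generalized element of $A\times_B C$, so that $f(a)=g(c)$ (equivalently $Vf(a)=Vg(c)$, as $V$ preserves the pullback). I would rerun the chain of equalities from the proof of the proposition: begin with the Mal'tsev identity $p_D(x,y,y)=x$; pull $V(\varphi)$ out through $p$ by naturality of $p$, using that $V$ preserves finite limits so that $p$ is computed componentwise on $V(A\times_B C)\cong V(A)\times_{V(B)}V(C)$ (this componentwise description itself follows from naturality of $p$ applied to $\pi_1$ and $\pi_2$); apply the second Mal'tsev identity $p_C(c,sg(c),sg(c))=p_C(sg(c),sg(c),c)$; push $V(\varphi)$ back in; and finally rewrite $sg(c)=sf(a)$ and $rf(a)=rg(c)$ using $f(a)=g(c)$. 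This produces
\[ V\varphi(a,c)=p_D\bigl(V\varphi e_1(a),\,V\varphi e_1(rf(a)),\,V\varphi e_2(c)\bigr), \]
exactly as in the proof of the proposition, where I have used $e_1(a)=(a,sf(a))$, $e_1(rf(a))=(rf(a),sf(a))$ (here $fr=1_B$), and $e_2(c)=(rg(c),c)$.

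At this stage, rather than replacing one $\varphi$ by another, I substitute the hypotheses $V\varphi e_1=V\a$ and $V\varphi e_2=V\g$, obtaining $V\varphi(a,c)=p_D\bigl(V\a(a),V\a(rf(a)),V\g(c)\bigr)$, and then rewrite the middle term using $\a r=\b$ as $V\a(rf(a))=V(\a r)(f(a))=V\b(f(a))$. Recognising $a=\pi_1(a,c)$, $c=\pi_2(a,c)$ and $f(a)=f\pi_1(a,c)$ yields precisely $V(\varphi)=p_D\langle V(\a\pi_1),V(\b f\pi_1),V(\g\pi_2)\rangle$. I do not expect a genuine obstacle: the only delicate points are the bookkeeping in the generalized-element manipulation — in particular checking that $(rf(a),sg(c))$ is a legitimate element of the pullback, which holds because $fr=1_B$, $gs=1_C$ and $f(a)=g(c)$ — and the componentwise behaviour of $p$ on the pullback, both of which are routine consequences of naturality and limit-preservation already invoked in the proof of the proposition.
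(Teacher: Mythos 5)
Your proof is correct and is exactly the argument the paper intends: the corollary is stated without proof, being an immediate consequence of the preceding proposition, whose proof already contains the chain of equalities you rerun (uniqueness from $(e_1,e_2)$ being jointly epimorphic, and the formula from $V\varphi(a,c)=p_D\bigl(V\varphi e_1(a),V\varphi e_1 rf(a),V\varphi e_2(c)\bigr)$ followed by the substitutions $\varphi e_1=\a$, $\varphi e_2=\g$ and $\a r=\b$). No discrepancy with the paper's approach.
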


In order to describe internal pseudocategories in $\A$ we first need to say what are internal groupoids,  internal functors and  internal natural transformations. To simplify notation, instead of $\Grpd(\A)$ we consider a different (but equivalent)  category that will be denoted by $\G(\A)^{*}$.

The category $\G(\A)$ has objects all triples $(A,s,t)$ with $A$ an object in $\A$ and $s,t\colon{A\to A}$ two endomorphisms of $A$ such that $st=t\text{ and } ts=s$. Observe that, as a consequence, we  have that $ss=s$ and $tt=t$. Indeed $ss=s(ts)=(st)s=ts=s$ and similarly for $tt=t$. The morphisms, say $f\colon{(A',s',t')\to(A,s,t)}$, are the morphisms $f\colon{A'\to A}$ in $\A$ such that $sf=fs'$ and $tf=ft'$.

The category $\G(\A)^{*}$ is defined as the full subcategory of $\G(\A)$ consisting of those triples $(A,s,t)$ in $\G(\A)$ for which the following property (P1) holds.
\begin{enumerate}
\item[(P1)]  for
every three morphisms in $\A$, \[\a,\b,\g\colon{X\to A}\] with
$s\a=s\b$ and $t\b=t\g$, there exists a unique morphism in $\A$,
$\varphi\colon{X\to A}$, such that \[V(\varphi)=p_A\langle
V\a,V\b,V\g\rangle.\]
\end{enumerate}

In order to prove that $\Grpd(\A)\cong\G(\A)^{*}$ we observe an important consequence of the property (P1) for a given object $(A,s,t)$ in $\G(\A)$.
See \cite{MF+VdL:14a} for further details on this particular instance for the associative property of the $V$-Mal'tsev operation $p$.

\begin{proposition}
Suppose $(A,s,t)$ is an object in $\G(\A)$ for which the property (P1) holds. Then the $V$-Mal'tsev operation $p$ is associative in the sense that for every nine morphisms $$x_i,y_i,z_i\colon{X\to A},\quad i=1,2,3,$$ in $\A$, such that
\begin{eqnarray}
sx_1=sx_2&=&sy_2=sy_1\label{cond10}\\
tx_2&=&tx_3\\
ty_1&=&tz_1\\
sx_3&=&sy_3\\
sz_1&=&sz_2\\
ty_3=tz_3&=&ty_2=tz2,\label{cond15}
\end{eqnarray}
the following equation holds
\begin{gather}\label{passoc}
p_A\langle p_A\langle
Vx_1, Vx_2, Vx_3\rangle, p_A\langle
Vy_1, Vy_2, Vy_3 \rangle,p_A\langle
Vz_1, Vz_2, Vz_3 \rangle\rangle
=\\
p_A\langle p_A\langle
Vx_1, Vy_1, Vz_1 \rangle,p_A\langle
Vx_2, Vy_2, Vz_2 \rangle,p_A\langle
Vx_3, Vy_3, Vz_3 \rangle\rangle.\nonumber
\end{gather}
\end{proposition}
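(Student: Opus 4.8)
The plan is to manufacture, from the nine given morphisms, two morphisms $L,R\colon X\to A$ of $\A$ whose images under $V$ are, respectively, the left-hand and the right-hand side of (\ref{passoc}), and then to prove $L=R$; since $V$ is faithful, this forces (\ref{passoc}).

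First I would build $L$ and $R$ by iterating (P1). Applying it to the triple $(x_1,x_2,x_3)$ --- which is legitimate because $sx_1=sx_2$ and $tx_2=tx_3$ --- produces a morphism $X'\colon X\to A$, which I denote $\varphi(x_1,x_2,x_3)$, with $V(X')=p_A\langle Vx_1,Vx_2,Vx_3\rangle$; likewise one gets $Y'=\varphi(y_1,y_2,y_3)$ and $Z'=\varphi(z_1,z_2,z_3)$. The naturality squares of $p$ at $s$ and $t$, together with the Mal'tsev identities (\ref{cond:mal'tsev}), give $sX'=sx_3$, $tX'=tx_1$ and the analogues for $Y'$ and $Z'$; hence $sX'=sY'$ and $tY'=tZ'$ follow from $sx_3=sy_3$ and $ty_1=tz_1$, so (P1) applies once more and delivers $L=\varphi(X',Y',Z')$ with $V(L)$ the left-hand side of (\ref{passoc}). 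The same procedure, run on the three columns $(x_1,y_1,z_1)$, $(x_2,y_2,z_2)$, $(x_3,y_3,z_3)$, produces $R$ with $V(R)$ the right-hand side. This part is mechanical, but it is exactly here that every one of the relations (\ref{cond10})--(\ref{cond15}) gets used, which explains their precise shape.

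It remains to prove $L=R$. Since $V$ is faithful it suffices to do this in $\A$, and since $p$ is a Mal'tsev operation only on each object $V(A)$ --- it is not an associative operation on $\B$ --- this cannot be obtained by purely formal manipulation. The substance is to use (P1) beyond its mere existence statement: its uniqueness clause, applied to split-span diagrams of the shape considered in the Corollary, forces the kernels of $s$ and of $t$ to commute inside $A$ in the appropriate internal sense, the exact categorical counterpart of condition (vii) in \S2. Granting this, one argues as in the category of groups. There $L$ and $R$ are the elementwise products $x_1x_2^{-1}x_3\,y_3^{-1}y_2y_1^{-1}\,z_1z_2^{-1}z_3$ and $x_1y_1^{-1}z_1\,z_2^{-1}y_2x_2^{-1}\,x_3y_3^{-1}z_3$, and a short manipulation reduces $L=R$ to the commutation $vw=wv$, where $v=y_2x_2^{-1}x_3y_3^{-1}$ and $w=y_2y_1^{-1}z_1z_2^{-1}$; now the relations (\ref{cond10})--(\ref{cond15}) say precisely that $tv$ and $sw$ are trivial, i.e.\ $v\in\ker t$ and $w\in\ker s$, so $vw=wv$ because $[\ker s,\ker t]=0$. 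For general $\A$ one transcribes this argument: the group products become iterated Mal'tsev combinations $\varphi$ (each a morphism supplied by (P1), with source and target computed as above), the re-association identity $\varphi(\varphi(a,b,c),d,e)=\varphi(a,b,\varphi(c,d,e))$ --- valid under the evident compatibilities, both sides being pinned down through the Corollary --- brings the two nested expressions into comparable form, and the internal commutation of the relevant ``kernels'' finishes the proof. The detailed computation is carried out, essentially in this setting, in \cite{MF+VdL:14a}.

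The first step is routine; the real difficulty is the second --- extracting from (P1) the commutativity of $\ker s$ and $\ker t$, and then organising the rearrangement so that every intermediate term is still a morphism of $\A$, i.e.\ so that the hypotheses (\ref{cond10})--(\ref{cond15}) keep (P1) applicable throughout.
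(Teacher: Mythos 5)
Your first step --- iterating (P1) to realize both sides of (\ref{passoc}) as $V(L)$ and $V(R)$ for actual morphisms $L,R\colon{X\to A}$ of $\A$, after checking via naturality of $p$ and the Mal'tsev identities that $s\varphi(x_1,x_2,x_3)=sx_3$ and $t\varphi(x_1,x_2,x_3)=tx_1$ --- is sound and agrees with the opening observation of the paper's proof. The gap is in the second step. Your reduction of $L=R$ to the commutation of $\ker s$ and $\ker t$ is a computation that only makes sense in the category of groups: in the setting of the proposition, $\A$ is merely a finitely complete category equipped with a faithful limit-preserving $V$ and a $V$-Mal'tsev operation, so there are no kernels of the endomorphisms $s,t$ and no commutator $[\ker s,\ker t]$ to speak of, and you give no argument that (P1) produces a categorical surrogate for them. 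Worse, the ``re-association identity'' $\varphi(\varphi(a,b,c),d,e)=\varphi(a,b,\varphi(c,d,e))$ that you invoke to organise the transcription to general $\A$ is itself an instance of the associativity law being proved --- it is exactly what the paper records afterwards as the special cases (\ref{passocleft}) and (\ref{passocright}) of this proposition --- so the argument as written is circular.

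Your claim that the identity ``cannot be obtained by purely formal manipulation'' is where you go astray: the paper's proof is precisely such a manipulation. Property (P1), applied to the three projections of the iterated pullback $R(s,t)$ of composable triples, yields an actual morphism $\pi\colon{R(s,t)\to A}$ in $\A$ with $V(\pi)=p_A\langle Vs_1\pi_1,Vs_2\pi_1,Vt_2\pi_2\rangle$. Naturality of $p$ with respect to $\pi$ gives $p_A\circ V(\pi)^3=V(\pi)\circ p_{R(s,t)}$, and since $p_{R(s,t)}$ is computed componentwise (naturality with respect to the three projections together with $VR(s,t)\cong R(V(s),V(t))$), evaluating both sides on the triple of rows $\langle Vx_1,Vx_2,Vx_3\rangle$, $\langle Vy_1,Vy_2,Vy_3\rangle$, $\langle Vz_1,Vz_2,Vz_3\rangle$ yields exactly (\ref{passoc}): one side is $p_A$ of the row-combinations, the other is $p_A$ of the column-combinations. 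This Eckmann--Hilton-style use of naturality against the morphism supplied by (P1) is the idea your proposal is missing; the hypotheses (\ref{cond10})--(\ref{cond15}) serve only to guarantee that all the triples in sight factor through $R(s,t)$.
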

\begin{proof}
Let us first observe that the unique morphism $$\varphi=\varphi(x_1,x_2,x_3)\colon{X\to A}$$ for which $V(\varphi)=p_A\langle Vx_1,Vx_2,Vx_3\rangle$ is such that $s\varphi=sx_3$ and $t\varphi=tx_1$. The same is true for all the other combinations of appropriate $x_i$, $y_i$, $z_i$. This means that if we consider the iterated pullback diagram (compare with the notion of pre-groupoid in \cite{MF+VdL:14a})
\[\xymatrix{R(s,t)\ar[r]^{\pi_2}\ar[d]_{\pi_1}&R(t)\ar[r]^{t_2}\ar[d]_{t_1}& A\ar[d]^{t}\\R(s)\ar[r]^{s_2}\ar[d]_{s_1}&A\ar[r]^{t}\ar[d]^{s}&A\\A\ar[r]^{s}&A}\]
in which all squares are pullback squares we have that each one of the following ordered triples $(x_1,x_2,x_3)$, $(y_1,y_2,y_3)$, $(z_1,z_2,z_3)$, $(x_1,y_1,z_1)$, $(x_2,y_2,z_2)$, $(x_3,y_3,z_3)$, $(\varphi(x_1,x_2,x_3), \varphi(y_1,y_2,y_3), \varphi(z_1,z_2,z_3))$ and $$(\varphi(x_1,y_1,z_1), \varphi(x_2,y_2,z_2), \varphi(x_3,y_3,z_3))$$ induces a morphism from $X$ into $R(s,t)$.

We now observe that the naturality  of $p$ with respect to the morphism $s_1\pi_1$ gives the commutative square
\[\xymatrix{VR(s,t)^3\ar[r]^{p_{R(s,t)}}\ar[d]_{V(s_1\pi_1)^3}& VR(s,t)\ar[d]^{V(s_1\pi_1)}\\VA^3\ar[r]^{p_A}&VA}\] and that  similar commutative squares are obtained for the morphism $s_2\pi_1$ (which is equal to the morphism $t_1\pi_2$) and for the morphism $t_2\pi_2$. This shows that  the  morphism $p_{R(s,t)}$ is equal to the induced morphism \[\langle p_AV(s_1\pi_1)^3,p_AV(s_2\pi_1)^3, p_AV(t_2\pi_2)^3\rangle,\] indeed $VR(s,t)\cong R(V(s),V(t))$. Now, the property (P1) gives us a unique morphism $\pi\colon{R(s,t)\to A}$ such that \[V(\pi)=p_A\langle Vs_1\pi_1,Vs_2\pi_1,Vt_2\pi_2\rangle,\]
which, by naturality of $p$, gives the  commutativity of the square
\[\xymatrix{VR(s,t)^3\ar[r]^{p_{R(s,t)}}\ar[d]_{V(\pi)^3}& VR(s,t)\ar[d]^{V(\pi)}\\VA^3\ar[r]^{p_A}&VA.}\]
From here we conclude that the morphism\[p_AV(\pi)^3\langle
\langle Vx_1, Vx_2, Vx_3\rangle, \langle
Vy_1, Vy_2, Vy_3, \rangle,\langle
Vz_1, Vz_2, Vz_3 \rangle\rangle\]
is equal to
\[V(\pi)p_{R(s,t)}
\langle
\langle Vx_1, Vx_2, Vx_3\rangle, \langle
Vy_1, Vy_2, Vy_3, \rangle,\langle
Vz_1, Vz_2, Vz_3 \rangle\rangle
%\langle p_A\langle
%Vx_1, Vy_1, Vz_1 \rangle,\langle
%Vx_2, Vy_2, Vz_2 \rangle,\langle
%Vx_3, Vy_3, Vz_3 \rangle\rangle\nonumber
\]
which simplifies to the equation (\ref{passoc}) and completes the proof.
\end{proof}

Two useful particular cases are
\begin{gather}\label{passocleft}
p_A\langle
Vx_1, Vx_2, p_A\langle
Vx_2, Vy_2, Vz_3 \rangle\rangle
=
 p_A\langle
Vx_1, Vy_2, Vz_3 \rangle
\end{gather}
and
\begin{gather}\label{passocright}
p_A\langle
p_A\langle
Vx_1, Vy_2, Vz_2, \rangle,
 Vz_2, Vz_3 \rangle
=
p_A\langle
Vx_1, Vy_2, Vz_3 \rangle\rangle.
\end{gather}

The following result makes explicit the categorical equivalence between $\Grpd(\A)$ and $\G(\A)^{*}$.

\begin{proposition}
There is an equivalence of categories between $\Grpd(\A)$ and $\G(A)^{*}$.
\end{proposition}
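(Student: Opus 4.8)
The plan is to construct explicit functors in both directions and show they are mutually inverse up to natural isomorphism (in fact, one direction will be an isomorphism on the nose). The key observation is that an internal groupoid in $\A$ is a reflexive graph
\[
\xymatrix{ A_1 \ar@<.5ex>[r]^{d} \ar@<-.5ex>[r]_{c} & A_0 \ar[l]|{e} }
\]
equipped with a (necessarily unique, by weak Mal'tsevness, since $\A$ has a $V$-Mal'tsev operation) composition and inversion satisfying the groupoid axioms. I would first recall that in a weakly Mal'tsev category a reflexive graph underlies at most one internal category, and that it underlies an internal groupoid precisely when a certain "division" morphism exists; the $V$-Mal'tsev operation $p$ gives a candidate for this division at the level of $\B$, namely $p_{A_1}\langle V(e d \pi_2), V\pi_2, V\pi_1\rangle$ and so on, and property (P1) is exactly what guarantees that this candidate lifts to a morphism in $\A$. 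Thus from an internal groupoid $(A_1, A_0, d, c, e, m, i)$ I would form the triple $(A_1, s, t)$ with $s = e d$ and $t = e c$; one checks immediately $st = e d e c = e c = t$ (using $de=1$) and $ts = s$ similarly, and $ss = s$, $tt = t$ follow as in the text. The content is to verify that $(A_1, s, t)$ satisfies (P1), using the composition and inversion of the groupoid to produce the required $\varphi$, and the weak Mal'tsev property (joint epimorphy of $e_1, e_2$) together with faithfulness of $V$ to get uniqueness.

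Conversely, given an object $(A, s, t)$ of $\G(\A)^{*}$, I would reconstruct the internal groupoid by taking $A_1 = A$, $A_0$ to be the (regular) image of $s$ — or more precisely I would show $s$ and $t$ are split idempotents whose splittings coincide, so that $A_0 := \mathrm{Im}(s) = \mathrm{Im}(t)$ carries $d := s$, $c := t$ corestricted, and $e$ the inclusion; the reflexive graph structure follows from $st = t$, $ts = s$. The object of composable pairs is the pullback $R(s,t)$ of $t$ and $s$ (in the notation already introduced), and the composition $m\colon R(s,t) \to A$ is defined to be the unique morphism with $V(m) = p_A\langle V\pi_1, V(s\pi_1), V\pi_2\rangle$ — which exists by (P1) applied to $\a = \pi_1$, $\b = s\pi_1 = t\pi_2$, $\g = \pi_2$ — while the inversion $i\colon A \to A$ is the unique morphism with $V(i) = p_A\langle V s, \mathrm{id}, V t\rangle$, again furnished by (P1). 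The associativity and identity laws for $m$, and the inverse laws for $i$, then reduce to identities among iterated applications of $p_A$: here is where the preceding Proposition on associativity of $p$ (equations (\ref{passoc}), and the specialisations (\ref{passocleft}), (\ref{passocright})) does the real work, together with the Mal'tsev identities (\ref{cond:mal'tsev}). Faithfulness of $V$ lets us deduce each groupoid axiom in $\A$ from its $V$-image in $\B$.

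Finally I would check functoriality of both assignments and that the composites are (naturally isomorphic to) the identities. On morphisms: a morphism $f$ of $\G(\A)^{*}$ satisfies $sf = fs'$, $tf = ft'$, and one verifies it commutes with $d, c, e$ hence (by the uniqueness clauses above, again via faithfulness of $V$ and naturality of $p$) with $m$ and $i$, so it is an internal functor; conversely an internal functor visibly commutes with $s = ed$ and $t = ec$. Starting from a groupoid, passing to $\G(\A)^{*}$ and back returns the same graph up to the canonical iso $A_0 \cong \mathrm{Im}(ed)$, and the composition/inversion are recovered because they were the \emph{unique} morphisms with the given $V$-image; starting from $(A,s,t)$ and going around returns $(A, ed, ec) = (A, s, t)$ on the nose. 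The main obstacle I anticipate is verifying that the triple associated to a groupoid genuinely satisfies (P1) — i.e.\ producing, for arbitrary $\a, \b, \g\colon X \to A$ with $s\a = s\b$, $t\b = t\g$, a morphism $\varphi$ with $V\varphi = p_A\langle V\a, V\b, V\g\rangle$ — since one must exhibit $\varphi$ concretely from $m$ and $i$ (essentially $\varphi = m\langle m\langle \a, i\b\rangle, \g\rangle$ suitably interpreted, which requires checking the relevant domain/codomain conditions so that the pullbacks are defined) and then confirm its $V$-image is the asserted value using the Mal'tsev equations; the uniqueness half is comparatively easy given weak Mal'tsevness. The bookkeeping of source/target conditions throughout is routine but must be done carefully to ensure all pullbacks used are legitimate.
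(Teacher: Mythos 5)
Your proposal is correct and follows essentially the same route as the paper: both directions are given by the same constructions (the triple $(C_1,ed,ec)$ one way; splitting the idempotent $s$ to recover $C_0$, with $m$ and $i$ furnished by (P1) via the formulas $V(m)=p\langle V\pi_1,V(ed\pi_1),V\pi_2\rangle$ and $V(i)=p\langle Vs,1,Vt\rangle$, the other way), and the verification of (P1) for a groupoid uses the same witness $\varphi$ built from $m$ and $i$ (your bracketing $m\langle m\langle\a,i\b\rangle,\g\rangle$ versus the paper's $m\langle\a,m\langle i\b,\g\rangle\rangle$ is immaterial), checked via the associativity identities (\ref{passocleft})--(\ref{passocright}). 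The only cosmetic difference is that the paper realises $C_0$ as the equalizer of $(1_A,s)$ rather than the image of $s$.
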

\begin{proof}
In order to prove that $\Grpd(\A)\cong\G(\A)^{*}$ we construct two functors \[F\colon{\Grpd(\A)\to\G(\A)^{*}}\] and \[G\colon{\G(\A)^{*}\to \Grpd(\A)}\]
and show that, up to isomorphism, they are inverse to each other. The functor $F$ takes an internal groupoid
\[C=\left(\xymatrix{C_2
\ar@<1.5ex>[r]^{\pi_2}\ar@<-1.5ex>[r]_{\pi_1}\ar@<0ex>[r]|{m} & C_1 \ar@(ru,lu)_{i}
\ar@<1ex>[r]^{d}\ar@<-1ex>[r]_{c}
%\ar@<-1.5ex>[l]|{e_2}\ar@<1.5ex>[l]|{e_1}
& C_0 \ar@<0ex>[l]|{e} }\right)\] in which $C_2$ is defined together with $\pi_1$ and $\pi_2$ as the pullback of $d$ along $c$, $m$ is the morphism giving the composition of arrows and $i$ gives the inverses (see \cite{Janelidzepreprint} for more details), and sends it to the triple $(C_1,ed,ec)$ in $G(\A)^{*}$. The conditions $st=t$ and $ts=s$ are easily obtained from the reflexivity condition $de=1_{C_1}=ce$. Let us check that the property (P1) holds. Given any three morphisms $\alpha,\beta,\gamma\colon{X\to C_1}$ such that  $ed\alpha=ed\beta$ and $ec\beta=ec\gamma$ or, equivalently, \[d\alpha=d\beta\quad\text{ and }\quad c\beta=c\gamma,\] we have an induced morphism \[\langle\alpha,\beta,\gamma \rangle\colon{X\to R(d,c)}\]into the iterated pullback R(d,c) obtained as follows
\[\xymatrix{R(d,c)\ar[r]^{\pi_2}\ar[d]_{\pi_1}&R(c)\ar[r]^{c_2}\ar[d]_{c_1}& C_1\ar[d]^{c}\\R(d)\ar[r]^{d_2}\ar[d]_{d_1}&C_1\ar[r]^{c}\ar[d]^{d}&C_1\\C_1\ar[r]^{d}&C_0.}\]
It is now clear that the desired morphism $\varphi\colon{X\to C_1}$ for which $$V(\varphi)=p_{C_1}\langle V\alpha,V\beta,V\gamma\rangle$$ is
\[\varphi=m\langle \alpha,m\langle i\beta,\gamma\rangle\rangle.\]
Indeed, from \cite{Janelidzepreprint} we know that $V(i)=p_{C_1}\langle V(ed),1_{VC_1},V(ec)\rangle$ and \[V(m)=p_{C_1}\langle V(\pi_1),V(ed\pi_1),V(\pi_2)\rangle\]
so that we observe
\begin{gather*}
V(m\langle \alpha,m\langle i\beta,\gamma\rangle\rangle)=\\
p_{C_1}\langle V(\alpha),V(ed\alpha), p_{C_1}\langle p_{C_1}\langle V(ed\beta),V(\beta),V(ec\beta)\langle,V(edi\beta),V(\gamma)\rangle\rangle
\end{gather*}
and because $di=c$ and $ed\alpha=ed\beta$ we also have
\begin{gather*}
V(m\langle \alpha,m\langle i\beta,\gamma\rangle\rangle)=\\
p_{C_1}\langle V(\alpha),V(ed\beta), p_{C_1}\langle p_{C_1}\langle V(ed\beta),V(\beta),V(ec\beta)\langle,V(ec\beta),V(\gamma)\rangle\rangle.
\end{gather*}
At last, using (\ref{passocleft}) and (\ref{passocright})  we conclude that
\[V(m\langle \alpha,m\langle i\beta,\gamma\rangle\rangle)=p_{C_1}\langle V\alpha,V\beta,V\gamma\rangle.\]

Conversely, given $(A,s,t)$ in $\G(\A)^{*}$ we obtain an internal groupoid
\[G(A,s,t)=\left(\xymatrix{C_2
\ar@<1.5ex>[r]^{\pi_2}\ar@<-1.5ex>[r]_{\pi_1}\ar@<0ex>[r]|{m} & C_1 \ar@(ru,lu)_{i}
\ar@<1ex>[r]^{d}\ar@<-1ex>[r]_{c}
%\ar@<-1.5ex>[l]|{e_2}\ar@<1.5ex>[l]|{e_1}
& C_0 \ar@<0ex>[l]|{e} }\right)\]
as follows:
\begin{enumerate}
\item[(a)] $C_1=A$
\item[(b)] $e\colon{C_0\to C_1}$ is the equalizer of the pair of morphisms $s,1_{C_1}\colon{C_1\to C_1}$
\item[(c)] $d$ and $c$ are the unique morphisms from $C_1$ into $C_0$ such that
\begin{eqnarray}
ed=s\nonumber\\
ec=t\nonumber
\end{eqnarray}
which exist because $st=t$ and $ss=s(ts)=(st)s=ts=s$, as illustrated
\[\xymatrix{ C_1\ar@<.5ex>[rd]^{t}\ar@<-.5ex>[rd]_{s}\ar@<.5ex>@{-->}[d]^{c}\ar@<-.5ex>@{-->}[d]_{d}\\C_0\ar[r]_{e}&C_1\ar@<.5ex>[r]^(.35){s}\ar@<-.5ex>[r]_(.35){1_{C_1}}&C_1=A}\]
\item[(d)] $C_2$ is obtained together with $\pi_1$ and $\pi_2$ as the pullback of $d$ along $c$ as pictured \begin{equation}\label{pullback02}
\xymatrix{C_2\ar[r]^(.5){\pi_2}\ar[d]_{\pi_1}&{C_1}\ar[d]^{c}\\{C_1}\ar[r]^{d}&{C_0}}
\end{equation}
\item[(e)] the existence of the morphisms $i\colon{C_1\to C_1}$ and $m\colon{C_2\to C_1}$ is a consequence of (P1) since (as it is explained in \cite{Janelidzepreprint} for Mal'tsev varieties) \[V(i)=p_{C_1}\langle V(ed), V(1_{C_1}),V(ec)\rangle\] and \[V(m)=p_{C_1}\langle V(\pi_1), V(ed\pi_1),V(\pi_2)\rangle.\]
\end{enumerate}

On morphisms we have $F(f_2,f_1,f_0)=f_1$ and $G(f)=(f\times_{dfe} f, f, dfe)$ and it is easy to check that all the required conditions are satisfied on both sides. Moreover it is not difficult to observe that $GF\cong 1_{\Grpd(\A)}$ and that $FG\cong 1_{\G(\A)^{*}}$.
\end{proof}

\section{Internal pseudocategories}
% in weakly Mal'tsev categories with a $V$-Mal'tsev operation

In this section we use the results from \cite{NMF-2cells} and give an explicit description for the notion of pseudocategory in $\G(\A)^{*}$. We continue to assume that $\A$ is a weakly Mal'tsev category with a $V$-Mal'tsev operation $p\colon{V^3\to V}$ and that $V\colon{\A\to\B}$ is a faithful functor which preserves finite limits.

To be able to use the results from \cite{NMF-2cells} we have to equip the category $\G(\A)^{*}$ with a two cell structure (\cite{NMF-2cells}, Definition 3.1). It is not difficult to observe that the equivalence \[\Grpd(\A)\cong\G(\A)^{*}\]
induces a natural 2-cell structure on $\G(\A)^{*}$. We simply have to translate the notion of internal natural transformation from $\Grpd(\A)$ into $\G(\A)^{*}$ by means of the equivalence. Indeed, closely following the  Example 5.7 in \cite{NMF-2cells}, if $\tau_0\colon{C_0\to C'_1}$ is an internal natural transformation in $\Grpd(\A)$, as illustrated,
\begin{equation}\label{diag:tau0}\xymatrix{C_2
\ar@<1.5ex>[r]^{\pi_2}\ar@<-1.5ex>[r]_{\pi_1}\ar@<0ex>[r]|{m} & C_1 \ar@(ru,lu)_{i}
\ar@<1ex>[r]^{d}\ar@<-1ex>[r]_{c}
%\ar@<-1.5ex>[l]|{e_2}\ar@<1.5ex>[l]|{e_1}
& C_0 \ar@<0ex>[l]|{e} \ar[ld]_(.6){\tau_0}
\\
C'_2
\ar@<1.5ex>[r]^{\pi'_2}\ar@<-1.5ex>[r]_{\pi'_1}\ar@<0ex>[r]|{m'} & C'_1 \ar@(rd,ld)^{i'}
\ar@<1ex>[r]^{d'}\ar@<-1ex>[r]_{c'}
%\ar@<-1.5ex>[l]|{e'_2}\ar@<1.5ex>[l]|{e_1}
& C'_0 \ar@<0ex>[l]|{e'}
}\end{equation}
from an internal functor $(f_1\times_{f_0}f_1, f_1,f_0)$ to  $(g_1\times_{g_0} g_1,g_1,g_0)$, then the triple $(g_1,\tau_0d,f_1)$ is a 2-cell in $\G(\A)^{*}$. In more general terms, if $$g,f\colon{(A',s',t')\to(A,s,t)}$$ are two parallel morphisms in $\G(\A)^{*}$ then a 2-cell from $f$ to $g$ is completely determined by a morphism $\tau\colon{A'\to A}$ with $s\tau=sf$, $t\tau=tg$ and such that
\[V(\tau)=p_A\langle Vg,Vt\tau s,V\tau
s\rangle=\langle V\tau t, Vs\tau t,Vf\rangle.\]
In fact, the morphism $\tau$ alone determines the morphisms $f$ and $g$.

\begin{proposition}
Given any three parallel morphisms $$\tau,f,g\colon{(A',s',t')\to(A,s,t)}$$ in $\G(\A)^{*}$ such that $s\tau=sf$ and $t\tau=tg$, if
\[V(\tau)=p_A\langle V(g),V(t\tau s),V(\tau
s)\rangle=p_A\langle V(\tau t), V(s\tau t),V(f)\rangle\]
then $f$ and $g$ are uniquely determined by
\[V(g)=p_A\langle V(\tau ),V(\tau s),V(t\tau s)\rangle\]
\[V(f)=p_A\langle V(s\tau t),V(\tau t),V(\tau)\rangle.\]
\end{proposition}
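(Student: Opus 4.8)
The plan is to exploit the associativity of the $V$-Mal'tsev operation $p$ on $A$, which is available because $(A,s,t)$ lies in $\G(\A)^{*}$, i.e.\ satisfies (P1); by the Proposition above, $p$ is associative in the sense of equation~(\ref{passoc}), and in particular the two ``absorption'' identities (\ref{passocleft}) and (\ref{passocright}) hold whenever the relevant source/target matching conditions are met. Since $V$ is faithful, it suffices to verify the two displayed equalities after applying $V$; thus the whole argument reduces to a manipulation of nested $p$-expressions, exactly of the type already performed in the proof that $\Grpd(\A)\cong\G(\A)^{*}$.

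First I would record the relevant matching conditions. From $s\tau=sf$, $t\tau=tg$, together with $st=t$, $ts=s$, $ss=s$, $tt=t$ and the fact that $s,t$ are morphisms in $\G(\A)^{*}$ (so they commute appropriately with $\tau,f,g$), one derives identities such as $s(t\tau s)=s(\tau s)$, $t(\tau s)=t(t\tau s)$, $s(s\tau t)=s(\tau t)$, $t(\tau t)=t(\tau)$, and so on — precisely the conditions needed to legitimately apply (\ref{passocleft}) and (\ref{passocright}) to the nested expressions that will arise. I would list these once at the start so the later computations read cleanly.

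Next, starting from the hypothesis $V(\tau)=p_A\langle V(g),V(t\tau s),V(\tau s)\rangle$, I would substitute this expression for $V(\tau)$ into the right-hand side of the claimed formula $p_A\langle V(\tau),V(\tau s),V(t\tau s)\rangle$ and collapse the resulting double-$p$ term using (\ref{passocright}) (with the roles $x_1\leftarrow g$, $y_2\leftarrow t\tau s$, $z_2\leftarrow \tau s$, $z_3\leftarrow t\tau s$, after checking the matching), obtaining $p_A\langle V(g), V(\tau s), V(t\tau s)\rangle$; a second application of the absorption identity — now using the other half of the hypothesis, $V(\tau)=p_A\langle V(\tau t),V(s\tau t),V(f)\rangle$, to rewrite $V(\tau s)$ appropriately, or directly using that $p_A\langle V(g),-,-\rangle$ already has $g$ in the first slot and $g$ is ``$s$-$t$-compatible'' — yields $V(g)$. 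The computation for $V(f)$ is entirely symmetric: substitute the hypothesis into $p_A\langle V(s\tau t),V(\tau t),V(\tau)\rangle$ and apply (\ref{passocleft}) twice. Concretely, both verifications are two-step telescoping collapses of a nested $p$-term down to a single generalized element.

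The main obstacle is purely bookkeeping: getting the nine source/target matching conditions of Proposition on associativity (equations (\ref{cond10})–(\ref{cond15})) to hold in each instance, since (\ref{passocleft}) and (\ref{passocright}) are only valid under those constraints, and here the ``elements'' $g$, $f$, $\tau$, $\tau s$, $\tau t$, $s\tau t$, $t\tau s$ interact with $s$ and $t$ in several different ways. I expect no genuine difficulty beyond carefully tracking which of $s$, $t$ land on which argument; once the matching is checked, faithfulness of $V$ closes the argument immediately.
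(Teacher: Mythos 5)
Your strategy is the paper's own: reduce to $\B$ via faithfulness of $V$, substitute the hypothesis $V(\tau)=p_A\langle V(g),V(t\tau s),V(\tau s)\rangle$ into the first slot of $p_A\langle V(\tau),V(\tau s),V(t\tau s)\rangle$, and collapse the nested term with the right absorption identity (\ref{passocright}). However, there is a slip in your execution that then sends you looking for a second step which is neither needed nor valid. With the roles you assign ($x_1\leftarrow g$, $y_2\leftarrow t\tau s$, $z_2\leftarrow \tau s$, $z_3\leftarrow t\tau s$), identity (\ref{passocright}) produces $p_A\langle x_1,y_2,z_3\rangle=p_A\langle V(g),V(t\tau s),V(t\tau s)\rangle$, not $p_A\langle V(g),V(\tau s),V(t\tau s)\rangle$ as you wrote. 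The correct intermediate expression finishes immediately by the Mal'tsev identity (\ref{cond:mal'tsev}), $p_A\langle x,y,y\rangle=x$, giving $V(g)$; this is exactly how the paper concludes. By contrast, your stated second move --- invoking the other half of the hypothesis to rewrite $V(\tau s)$, or appealing to ``$s$-$t$-compatibility of $g$'' --- does not yield $V(g)$ from the expression $p_A\langle V(g),V(\tau s),V(\tau s t)\rangle$ you arrived at, since $\tau s$ and $t\tau s$ need not coincide. The formula for $V(f)$ is handled symmetrically with (\ref{passocleft}) and again closes with (\ref{cond:mal'tsev}) alone. Your remark about verifying the source/target matching conditions before invoking the absorption identities is well taken (the paper leaves this implicit), but the actual telescoping is a one-step collapse, not two.
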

\begin{proof}
We observe that
\[p_A\langle V(\tau),V(\tau s),V(t\tau s)\rangle=p_A\langle p_A\langle V(g),V(t\tau s),V(\tau
s)\rangle, V(\tau s), V(t\tau s)\rangle\] which by (\ref{passocright}) becomes
\[p_A\langle V(\tau),V(\tau s),V(t\tau s)\rangle= p_A\langle V(g),V(t\tau s),V(t\tau s)\rangle\] and by (\ref{cond:mal'tsev}) is simply
\[p_A\langle V(\tau),V(\tau s),V(t\tau s)\rangle=  V(g).\]
In a similar way we prove the other condition.
\end{proof}

The previous result tells us that, for a fixed pair of objects $$((A',s',t'),(A,s,t))$$ in ${\G(\A)^{*}}^{op}\times \G(\A)^{*}$, every morphism $\tau\colon{A'\to A}$ in $\A$, for which $t\tau t'=t\tau$ and $s\tau s'=s\tau$, can be regarded as a 2-cell. Its domain is the unique morphism, say $\dom(\tau)$, in $\A$ for which \[V(\dom(\tau))=p_A\langle V(s\tau t),V(\tau t),V(\tau)\rangle\] while its codomain is the unique morphism in $\A$, say $\cod(\tau)$, for which
\[V(\cod(\tau))=p_A\langle V(\tau ),V(\tau s),V(t\tau s)\rangle.\]
The existence of the two morphisms is ensured  by the property (P1) and it is not difficult to check that $\dom(\tau)$ and $\cod(\tau)$ are morphisms in $\G(\A)^{*}$.
This gives us a bifunctor \[H\colon{{\G(\A)^{*}}^{op}\times \G(\A)^{*}\to\Set}\]
together with two natural transformations \[\dom,\cod\colon{H\to \hom_{\G(\A)^{*}}}\]
and moreover we can define two other natural transformations, $0$ and $+$, and make the structure $(H,\dom,\cod,0,+)$ a 2-cell structure on the category $\G(\A)^{*}$ as defined in \cite{NMF-2cells}, Definition 3.1. The natural transformation $0\colon{\hom\to H}$ is such that $0(f)=f$ and, given any two elements $\tau,\sigma$ in $H((A',s',t'),(A,s,t))$ such that $\dom(\tau)=f=\cod(\sigma)$, then $\tau+\sigma$ is the unique morphism in $\A$, $\tau+\sigma\colon{A'\to A}$, for which \[V(\tau+\sigma)=p_A\langle V(\tau),V(f),V(\sigma)).\]

The following result summarizes the previous statement.

\begin{proposition} Consider the structure $(H,\dom,\cod,0,+)$, with $H$ the bifunctor \[\xymatrix{(A''',s''',t''')&(A'',s'',t'')\ar[d]^{v}&\ar@{}[d]="a"&\ar@{}[d]="b"&H((A''',s''',t'''),(A'',s'',t''))\ar[d]^{H(u,v)}\\(A',s',t')\ar[u]^{u}&(A,s,t)&&&H((A',s',t'),(A,s,t))\ar@<4ex>@{|->}|{} "a";"b"}\] from ${\G(\A)^{*}}^{op}\times \G(\A)^{*}$ into the category of Sets given by\[H((A',s',t'),(A,s,t))=\{\tau\colon{A'\to A}\mid s\tau s'=s\tau,t\tau t'=t\tau\}\]
\[H(u,v)(x)=vxu,\]
and the natural transformations $\dom$, $\cod$, $0$ and $+$ defined as  above. This structure is an invertible,
natural and cartesian 2-cell structure on $\G(\A)^{*}$.
\end{proposition}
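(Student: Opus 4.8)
The plan is to verify, one axiom at a time, that the quintuple $(H,\dom,\cod,0,+)$ satisfies Definition~3.1 of \cite{NMF-2cells}, and then to establish the three adjectives (invertible, natural, cartesian) separately. First I would check that $H$ is a well-defined bifunctor: given $\tau$ with $s\tau s'=s\tau$ and $t\tau t'=t\tau$, and morphisms $u\colon{(A',s',t')\to(A''',s''',t''')}$ wait — more carefully, given $u$ into the source and $v$ out of the target in $\G(\A)^*$, one must see that $vxu$ again satisfies the two defining equations; this is immediate from $su=us'''$-type relations (the defining property of morphisms in $\G(\A)$) together with the hypotheses on $x$, and functoriality in $(u,v)$ is just associativity of composition in $\A$. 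Next I would confirm that $\dom$ and $\cod$, as defined via the formulas $V(\dom\tau)=p_A\langle V(s\tau t),V(\tau t),V(\tau)\rangle$ and $V(\cod\tau)=p_A\langle V(\tau),V(\tau s),V(t\tau s)\rangle$, land in $\hom_{\G(\A)^*}$ — i.e. that $\dom\tau$ and $\cod\tau$ commute appropriately with $s,t$ — and that they are natural in $(u,v)$; naturality follows from naturality of $p$ applied to the morphisms $u$ and $v$, using that $V$ preserves the relevant composites.

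Then I would treat the algebraic 2-cell axioms. That $0(f)=f$ has $\dom(0f)=f=\cod(0f)$ reduces, after applying $V$, to the Mal'tsev identities $p_A\langle x,y,y\rangle=x=p_A\langle y,y,x\rangle$ from \eqref{cond:mal'tsev} together with the identities $sf=fs'$, $tf=ft'$ available for $f$ a morphism of $\G(\A)^*$. For the partial addition $+$: given $\tau,\sigma$ with $\dom\tau=f=\cod\sigma$, one defines $\tau+\sigma$ by $V(\tau+\sigma)=p_A\langle V\tau,Vf,V\sigma\rangle$ — the property (P1) guarantees this morphism exists in $\A$ once one checks the side conditions $s\tau=sf=s\sigma$ and $t\tau=tf=t\sigma$, which come from unwinding the definitions of $\dom$ and $\cod$ and using the Mal'tsev identities. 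One must then verify $\dom(\tau+\sigma)=\dom\sigma$, $\cod(\tau+\sigma)=\cod\tau$, the unit laws $0(\cod\tau)+\tau=\tau=\tau+0(\dom\tau)$, and associativity of $+$; each of these, after applying the faithful $V$, becomes an instance of the associativity of $p$ (Proposition with equations \eqref{passoc}, \eqref{passocleft}, \eqref{passocright}) combined with \eqref{cond:mal'tsev}. For instance $0(\cod\tau)+\tau$ has $V$-image $p_A\langle V\cod\tau,V\cod\tau,V\tau\rangle=p_A\langle V\tau,V\tau s,p_A\langle V\tau,V\tau s,V\tau\rangle\cdots\rangle$ collapsing via \eqref{passocright} and \eqref{cond:mal'tsev} to $V\tau$.

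For the three qualifiers: \emph{invertible} means every 2-cell has an additive inverse over its boundary, i.e. for $\tau$ there is $\bar\tau$ with $\dom\bar\tau=\cod\tau$, $\cod\bar\tau=\dom\tau$ and $\tau+\bar\tau=0(\cod\tau)$; one takes $V(\bar\tau)=p_A\langle V(\tau t),V\tau,V(\tau s)\rangle$ — again produced by (P1) — or more cleanly uses the groupoid inverse $i$ transported across the equivalence $\Grpd(\A)\cong\G(\A)^*$, and checks the identities by the associativity of $p$. \emph{Natural} refers to the distributivity/interchange of $+$ with the bifunctor action $H(u,v)$, i.e. $v(\tau+\sigma)u=v\tau u+v\sigma u$, which follows from naturality of $p$ in $u,v$. \emph{Cartesian} (in the sense of \cite{NMF-PhD,NMF-2cells}) asks that certain canonical comparison maps built from the 2-cell structure and pullbacks be isomorphisms — concretely that, for the relevant pullback squares in $\G(\A)^*$, the induced map on $H$-values is a bijection; here the key input is precisely property (P1), which says exactly that the $p$-combination of three compatible morphisms exists uniquely, so the comparison is built and inverted by a single application of (P1) plus the associativity equations. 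The main obstacle, I expect, is the cartesian condition: unlike the purely equational axioms, it requires identifying the correct comparison morphism from the formalism of \cite{NMF-2cells} and showing surjectivity as well as injectivity, and this is where the full strength of (P1) — rather than just the bare $V$-Mal'tsev operation — is needed. All the remaining verifications are, as the paper signals with "it is not difficult to check", mechanical once the $V$-image formulas are in hand and one systematically substitutes the defining equations into the associativity law for $p$.
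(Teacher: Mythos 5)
Your verification of well-definedness (of $H(u,v)$, $\dom$, $\cod$, $0$ and $+$) and your identification of (P1) together with the associativity identities for $p$ as the engine behind all the equational axioms match the paper's proof, which carries out exactly those computations. Where you genuinely diverge is in the treatment of ``invertible, natural and cartesian''. For invertibility the paper takes $-\tau$ to be the unique morphism with $V(-\tau)=p_A\langle V(\dom(\tau)),V(\tau),V(\cod(\tau))\rangle$; your first formula $p_A\langle V(\tau t'),V(\tau),V(\tau s')\rangle$ does not have the correct boundary in general (post-composing with $s$ gives $s\tau t'$, whereas one needs $s\cod(\tau)=t\tau s'$), although your fallback of transporting the groupoid inverse $i$ across the equivalence is sound. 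More importantly, for ``natural and cartesian'' the paper does not argue directly at all: it sets $\tau_0=\tau e'$, with $e'$ the equalizer of $(1_{A'},s')$, checks that this is an internal natural transformation from $G(\dom(\tau))$ to $G(\cod(\tau))$ in $\Grpd(\A)$, and then invokes Example 5.7 of \cite{NMF-2cells}; that is, it transports the known natural cartesian 2-cell structure of internal natural transformations across the equivalence $\Grpd(\A)\cong\G(\A)^{*}$. Your direct route could be made to work for ``cartesian'' --- your reading, that 2-cells into a pullback correspond bijectively to compatible pairs of 2-cells, is the right one and (P1) is indeed the key input --- but your reading of ``natural'' as the distributivity $v(\tau+\sigma)u=v\tau u+v\sigma u$ is not the intended one: that identity is merely the naturality of $+$ as a transformation of bifunctors, which belongs to the basic 2-cell structure. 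In \cite{NMF-2cells} the qualifier ``natural'' refers to the middle-interchange law relating the two whiskerings (so that the resulting sesquicategory is a 2-category), and your plan as written does not address it. This is the one genuine gap; the paper's reduction to $\Grpd(\A)$ is precisely how it avoids verifying that law by hand.
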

\begin{proof}
First we show that $H(u,v)(x)$ is well defined. Indeed $s(vxu)s'=vs''xs'''u$ and, since $s''xs'''=s''x$, we have that $s(vxu)s'=vs''xu=svxu$. Similarly, we prove that $t(vxu)t'=t(vxu)$. Now we prove that $\dom$ and $\cod$ are well defined. Indeed,
\begin{eqnarray*}
V(s\dom(\tau))&=&V(s)V(\dom(\tau))=V(s)p_A\langle V(s\tau t'),V(\tau t'),V(\tau)\rangle\\
&=&p_A\langle V(ss\tau t'),V(s\tau t'),V(s\tau)\rangle\\
&=&p_A\langle V(s\tau t'),V(s\tau t'),V(s\tau)=V(s\tau)
\end{eqnarray*}
 means that $s\dom(\tau)=s\tau$. Since $\tau$ is in $H((A',s',t'),(A,s,t))$ we have $s\dom(\tau)=s\tau s'$, and to prove that $s\dom(\tau)=\dom(\tau)s'$ we observe
\begin{eqnarray*}
V(\dom(\tau)s')&=&p_A\langle V(s\tau t's'),V(\tau t's'),V(\tau s')\rangle\\
&=&p_A\langle V(s\tau s'),V(\tau s'),V(\tau s')\rangle=V(s\tau s').\\
\end{eqnarray*}
A similar argument shows that $t\dom(\tau)=\dom(\tau)t'$ and similarly we can prove that $\cod$ is well defined. Moreover, an analogous argument can be used to prove that $0$ and $+$ are well defined. This shows that the structure is well defined. To see that it is invertible we have to exhibit an inverse $-\tau$ for each element $\tau$ in $H((A',s',t'),(A,s,t))$. In fact $-\tau\colon{A'\to A}$ is the unique morphism in $\A$ for which  $V(-\tau)=p_A\langle V(\dom(\tau)),
V\tau,V(\cod(\tau))\rangle$ whose existence is guaranteed by the property (P1). It remains to prove that the structure is a natural and cartesian 2-cell structure in the sense of Definition 3.1 in \cite{NMF-2cells}. To do that we take $\tau_0$ in the diagram (\ref{diag:tau0}) to be $\tau e'$, with $\tau\in H((A',s',t'),(A,s,t))$ and $e'$ the equalizer of the pair $(1_A,s')$, and observe that it gives an internal natural transformation in $\Grpd(\A)$; more specifically a natural transformation from $G(\dom(\tau))$ into $G(\cod(\tau))$, with $G\colon{\G(\A)^{*}\to\Grpd(\A)}$ the functor  in the equivalence $\G(\A)^{*}\cong\Grpd(\A)$. The result then follows from Example 5.7 in \cite{NMF-2cells}.
\end{proof}

We can now use Theorem 6.2 of \cite{NMF-2cells} in order to give a description of (homogeneous) pseudocategories in $\G(\A)^{*}$. Recall that a homogeneous pseudocategory is a pseudocategory with the extra condition $1_a\otimes 1_a=1_a$ which is reflected as condition $\lambda e=e=\rho e\nonumber$ in the following result.

\begin{proposition}
Let $\A$ be a weakly Mal'tsev category with a $V$-Mal'tsev operation $p$. An internal (homogeneous) pseudocategory in $\Grpd(\A)\cong\G(\A)^{*}$ is completely determined by a diagram in $\G(\A)^{*}$ of the form
\begin{equation*}
\xymatrix{ C_1   \ar@<1ex>[r]^{d} \ar@<-1ex>[r]_{c}  & C_0  \ar[l]|{e}},
\end{equation*}
together with 2-cells $\lambda,\rho\in H(C_1,C_1)$
%\[\xymatrix{C_1\ar@/^1pc/[rr]^{\dom(\lambda)}="1"\ar@/_1pc/[rr]_{1_{C_1}}="2" && C_1 \ar@{=>}|{\lambda } "1";"2"}\]
such that
\begin{gather}
de=1_B=ce\nonumber\\
d\lambda=d=d\rho\nonumber\\
c\lambda=c=c\rho \nonumber\\
\lambda e=e=\rho e\nonumber\\
\cod(\lambda)=1_{C_1}=\cod(\rho)\nonumber
\end{gather}
moreover, there exists a (unique) morphism $m\colon{C_1\times_{C_0}C_1\to C_1}$ in $\G(\A)^{*}$ for which
 \[V(m)=p_{C_1}\langle V(\dom(\rho)\pi_1),V(ed\pi_1),V(\dom(\lambda)\pi_2)\rangle\]
and there exists a (unique) 2-cell $\alpha\in H(C_1\times_{C_0}(C_1\times_{C_0} C_1),
C_1)$ such that \[V(\alpha)=p_{C_1}\langle  V(\a_1\pi_1),V(0_ed\pi_1),p_{C_1}\langle V( \alpha_2\pi_2),V(0_e d\pi_2),V(\alpha_3\pi_3)\rangle \rangle;\]
with
$\a_1,\alpha_2,\alpha_3\in H(C_1, C_1)$ defined as
\begin{eqnarray*}
\alpha_1 &=&\rho \dom(\rho)\\
\alpha_2&=&-\dom(\rho) \lambda +\lambda \dom(\rho)\\
\alpha_3&=&\lambda\dom(\lambda).
\end{eqnarray*}
\end{proposition}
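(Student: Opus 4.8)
The plan is to deduce the statement directly from Theorem~6.2 of \cite{NMF-2cells}, which describes internal pseudocategories in an arbitrary weakly Mal'tsev category equipped with a weakly Mal'tsev cartesian 2-cell structure. So the first task is to check that $\G(\A)^{*}$, together with the structure $(H,\dom,\cod,0,+)$, is an instance of such a setting. The cleanest way I would do this is through the equivalence $\Grpd(\A)\cong\G(\A)^{*}$: the category $\Grpd(\A)$ is a full subcategory of $\Cat(\A)$ closed under finite limits (a finite limit of internal groupoids is again an internal groupoid), the canonical 2-cell structure on $\Grpd(\A)$ is the restriction of the canonical one on $\Cat(\A)$, and the latter makes $\Cat(\A)$ weakly Mal'tsev with a weakly Mal'tsev cartesian 2-cell structure whenever $\A$ is weakly Mal'tsev. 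Since weak Mal'tsevness, invertibility, naturality and cartesianness are all phrased in terms of split squares, pullbacks, jointly epimorphic pairs and pullback-stability of 2-cells, they pass to a full subcategory closed under finite limits and are invariant under the equivalence; the preceding Proposition is in fact a hands-on confirmation of this for $\G(\A)^{*}$. (Alternatively, the weak Mal'tsevness of $\G(\A)^{*}$ can be proved from scratch by repeating, one level up, the argument used to show that $\A$ is weakly Mal'tsev, with (P1) guaranteeing that the morphisms manufactured from $p$ are again morphisms of $\G(\A)^{*}$.)

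Granting the hypotheses, Theorem~6.2 of \cite{NMF-2cells} applies and tells us that an internal homogeneous pseudocategory in $\G(\A)^{*}$ is exactly the reflexive graph $C_1\rightrightarrows C_0$ equipped with two invertible 2-cells $\l,\r$ satisfying the listed normalization conditions, with the composition $m$ and the associator $\a$ reconstructed from universal formulas written in the language of the 2-cell structure (in terms of $\dom$, $\cod$, $0$ and $+$), and with the triangle and pentagon coherence conditions automatically holding --- this last point being precisely the statement that $\a$ is forced by $\l$ and $\r$. What is then left is a substitution: inside those universal formulas, replace $\dom(\tau)$, $\cod(\tau)$, $0(f)$ and $\tau+\sigma$ by their explicit $H$-descriptions, each of which is a single application of $p$ after $V$ as spelled out in the preceding Proposition, and collapse the iterated occurrences of $p$ that result, using the associativity of $p$ established earlier together with the Mal'tsev identities $(\ref{cond:mal'tsev})$ and their particular cases $(\ref{passocleft})$ and $(\ref{passocright})$. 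Carrying this out turns the abstract formula for $m$ into $V(m)=p_{C_1}\langle V(\dom(\r)\pi_1),V(ed\pi_1),V(\dom(\l)\pi_2)\rangle$ and the abstract formula for $\a$ into the displayed one, with $\a_1=\r\dom(\r)$, $\a_2=-\dom(\r)\l+\l\dom(\r)$ and $\a_3=\l\dom(\l)$.

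I expect the main obstacle to be precisely this last bookkeeping step: matching, term by term, the output of Theorem~6.2 --- written for a generic 2-cell structure --- against the concrete $p$-based formulas that define $H$, and checking that the full list of conditions appearing in Theorem~6.2, and in particular the admissibility condition (which in \cite{NMF-2cells} has no closed form), degenerates under (P1) to exactly the five displayed equations $de=1_{C_0}=ce$, $d\l=d=d\r$, $c\l=c=c\r$, $\l e=e=\r e$ and $\cod(\l)=1_{C_1}=\cod(\r)$, with existence and uniqueness of $m$ and $\a$ then supplied directly by (P1). Keeping careful track of where $ed$ and $ec$ act (they play the roles of $s$ and $t$ for the object $C_1$), and verifying at each stage that the intermediate morphisms really are morphisms of $\G(\A)^{*}$, is where the care is needed; the repeated simplification of nested $p$'s is routine once the associativity Proposition is invoked.
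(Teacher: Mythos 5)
Your proposal follows essentially the same route as the paper: the paper's proof is precisely the one-line observation that the result follows from Theorem~6.2 of \cite{NMF-2cells} (or Theorem~108 of \cite{NMF-PhD}) once the admissibility condition is translated in terms of the $V$-Mal'tsev operation $p$, with the hypotheses on the 2-cell structure supplied by the preceding proposition. Your additional remarks on verifying weak Mal'tsevness of $\G(\A)^{*}$ and on the term-by-term substitution are a reasonable expansion of what the paper leaves implicit.
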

\begin{proof}
The result is a consequence of Theorem 6.2 in \cite{NMF-2cells} or Theorem 108 in \cite{NMF-PhD}, with the admissibility condition translated in terms of the $V$-Mal'tsev operation $p$.
\end{proof}

The previous result can be made more explicit. Indeed we have a concrete description for the objects and the morphisms in the category $\G(A)^{*}$. For example, the object $C_1$ can be seen as a triple $(A,s,t)$ with $A$ and object in $\A$, $s,t\colon{A\to A}$ endomorphisms of $A$ such that $ts=s$, $st=t$ and condition (P1) holds. As a corollary we can take the special case when $C_0$ is of the form $(B,1_B,1_B)$ having then a description of internal bicategories in $\A$.

\begin{proposition} Let $\A$ be a weakly Mal'tsev category with a $V$-Mal'tsev operation $p$. An internal  bicategory (homogeneous with invertible 2-cells)
in $\A$ is completely determined by a diagram in $\A$ of the form
\begin{equation}\label{d.st-lr-dec1}
\xymatrix{ A   \ar@<1ex>[r]^{d} \ar@<-1ex>[r]_{c} \ar@(ld,d)[]_{
\lambda,\rho } \ar@(lu,u)[]^{ s,t }
 & B  \ar[l]|{e}},
\end{equation}
satisfying the conditions
\begin{eqnarray}\label{c.de1ce1}
de=1_B=ce
\end{eqnarray}
\begin{eqnarray}\label{c.st_t_ts_s1}
st=t\quad, \quad ts=s
\end{eqnarray}
\begin{eqnarray}\label{c.ds_d_dt1}
ds=d=dt\nonumber\\
cs=c=ct \\
se=e=te\nonumber
\end{eqnarray}
\begin{eqnarray}\label{c.dro_d_dlambda1}
d\lambda=d=d\rho\nonumber\\
c\lambda=c=c\rho \\
\lambda e=e=\rho e\nonumber
\end{eqnarray}
\begin{eqnarray}\label{c.p1p1}
p_A\langle V\lambda,V\lambda s,V t\lambda s\rangle=1_A=p_A\langle V\rho
,V\rho s,V t\rho s\rangle
\end{eqnarray}
and such that the following properties hold:
\begin{enumerate}
\item [(P1)] for every three morphisms in $\A$, \[\a,\b,\g\colon{X\to
A}\] with $s\a=s\b$ and $t\b=t\g$, there exists a unique morphism in
$\A$, $\varphi\colon{X\to A}$, such that \[V\varphi=p_A\langle
V\a,V\b,V\g\rangle;\]
\item [(P2)] there are two morphisms $m,\mu\colon{A\times_B A\to A}$, in
$\A$, such that \[Vm=p_A(Vu\pi_1,Ved\pi_1,Vv\pi_2)\] and
\[V\mu=p_A(V\a_2\pi_1,Ved\pi_1,V\a_3\pi_2);\] where the morphisms $u,v,\a_2,\a_3\colon{A\to A}$ are
defined by the equations
\begin{eqnarray*}
Vu&=&p_A\langle Vs\r t,V\r t,V\r\rangle \\
Vv&=&p_A \langle Vs\l t, V\l t, V\l \rangle \\
V\a_2 &=& p_A \langle Vs\r\l t, V\r s\l t, p_A\langle Vs\r t, V\r t,
V\r\rangle\rangle \\
V\a_3&=&p_A\langle V\l s\l t, V\l^2 t, V\l^2\rangle
\end{eqnarray*}
\item [(P3)] there exists a morphism $\theta\colon{A\times_B(A\times_B A)\to
A}$, in $\A$, such that \[V\theta=p_A( V\a_1\pi_1,Ved\pi_1,V\mu
\pi_{23});\] the morphism $\mu$ is defined as in (P2) and
$\a_1\colon{A\to A}$ is such that \[V\a_1=p_A\langle Vs\r^2 t, V\r s\r
t, p_A\langle Vs\r t,V \r t,V \r\rangle\rangle.\]
\end{enumerate}

\end{proposition}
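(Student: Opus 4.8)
The plan is to specialize the preceding proposition on homogeneous pseudocategories in $\G(\A)^{*}$ to the case where the object of objects $C_0$ has the discrete form $(B,1_B,1_B)$, and then to unwind the abstract data $(\dom,\cod,+,H,\ldots)$ into the concrete morphisms $u,v,\a_1,\a_2,\a_3,m,\mu,\theta$ listed in the statement. Since an internal bicategory in $\A$ (homogeneous, with invertible $2$-cells) is by definition a homogeneous pseudocategory in $\Cat(\A)$ whose object-of-objects is discrete, and since in the present setting internal groupoids are what we must work with (so $\Cat$ is replaced by $\Grpd(\A)\cong\G(\A)^{*}$), this is exactly the instance of the previous proposition in which $C_0=(B,1_B,1_B)$. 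So the first step is simply to record that translation: the diagram \eqref{d.st-lr-dec1} is the underlying reflexive graph of the $\G(\A)^{*}$-internal reflexive graph $(C_1,C_0,d,c,e)$, with $C_1=(A,s,t)$ and $C_0=(B,1_B,1_B)$, and the $2$-cells $\lambda,\rho\in H(C_1,C_1)$ are morphisms $\lambda,\rho\colon A\to A$ in $\A$ satisfying $s\lambda s=s\lambda$, $t\lambda t=t\lambda$ (and likewise for $\rho$).

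Next I would verify that the six families of conditions \eqref{c.de1ce1}--\eqref{c.p1p1} are precisely the explicit forms of the conditions appearing in the previous proposition. Conditions \eqref{c.de1ce1} and the first two lines of \eqref{c.ds_d_dt1}, \eqref{c.dro_d_dlambda1} are just $d,c,e,\lambda,\rho$ being morphisms of $\G(\A)^{*}$ together with the reflexive-graph identities $de=1_B=ce$; the condition $st=t$, $ts=s$ is the defining condition for $(A,s,t)\in\G(\A)$; the lines $ds=d=dt$, $cs=c=ct$ and $se=e=te$ encode that $d$, $c$ are $\G(\A)^{*}$-morphisms into the discrete object $(B,1_B,1_B)$ and that $e$ lands in the "identities" (equivalently, $d=dc$-type relations coming from $ed=s$, $ec=t$ under the equivalence of Proposition on $\Grpd(\A)\cong\G(\A)^{*}$); the line $\lambda e=e=\rho e$ is the homogeneity condition $\lambda e=e=\rho e$; and finally \eqref{c.p1p1}, namely $p_A\langle V\lambda,V\lambda s,Vt\lambda s\rangle=1_A=p_A\langle V\rho,V\rho s,Vt\rho s\rangle$, is exactly the condition $\cod(\lambda)=1_{C_1}=\cod(\rho)$ after substituting the formula $V(\cod(\tau))=p_A\langle V\tau,V\tau s,Vt\tau s\rangle$ from Section~\ref{sec:5}. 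So this step is a dictionary check, item by item.

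The substantive step is to identify the morphisms $m,\mu,\theta$ and the auxiliary $u,v,\a_1,\a_2,\a_3$ of (P1)--(P3) with the data $m$, $\alpha$, $\alpha_1,\alpha_2,\alpha_3$ of the previous proposition. Here one must compute $\dom(\lambda)$ and $\dom(\rho)$ using $V(\dom(\tau))=p_A\langle V(s\tau t),V(\tau t),V\tau\rangle$; substituting into $V(m)=p_{C_1}\langle V(\dom(\rho)\pi_1),V(ed\pi_1),V(\dom(\lambda)\pi_2)\rangle$ and into the formulas for $\alpha_1=\rho\dom(\rho)$, $\alpha_2=-\dom(\rho)\lambda+\lambda\dom(\rho)$, $\alpha_3=\lambda\dom(\lambda)$, and then applying the associativity identity \eqref{passoc} (and its special cases \eqref{passocleft}, \eqref{passocright}) together with the idempotency $ss=s$, $tt=t$, $ed=s$, $ec=t$, should collapse these to the stated formulas $Vu=p_A\langle Vs\r t,V\r t,V\r\rangle$, $Vv=p_A\langle Vs\l t,V\l t,V\l\rangle$, and the displayed expressions for $V\a_1,V\a_2,V\a_3$. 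Likewise the $2$-cell $\alpha$ of the previous proposition, once its codomain is read off via $\cod$, yields the equation defining $\theta$ (with $\mu$ as in (P2)), while its domain similarly splits off $m$. The existence-and-uniqueness of $m,\mu,\theta$ is then exactly the assertion that the relevant $p_A$-combinations are realized by morphisms of $\A$, which is what (P1) (equivalently, property (P1) for $(A,s,t)$) guarantees.

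The main obstacle will be the bookkeeping in that last step: keeping track of which of $s,t$ precompose and postcompose each occurrence of $\l,\r$, correctly matching the pullback projections $\pi_1,\pi_2,\pi_3$ and the "zero" $2$-cells $0_e d$, and applying \eqref{passoc}, \eqref{passocleft}, \eqref{passocright} in the right order to reduce nested ternary $p_A$-expressions to the compact forms in the statement. Everything else is a formal transcription. I would therefore present the proof as: (1) invoke the previous proposition with $C_0=(B,1_B,1_B)$; (2) match conditions via the dictionary for $\dom$, $\cod$, $0$, $+$, and the equivalence $\Grpd(\A)\cong\G(\A)^{*}$; (3) compute $\dom(\l)$, $\dom(\r)$ and substitute, using the associativity of $p$ to obtain the explicit formulas for $u,v,m,\a_i,\mu,\theta$; and conclude that (P1)--(P3) hold, citing Theorem~6.2 of \cite{NMF-2cells} (equivalently Theorem~108 of \cite{NMF-PhD}) for the fact that these data, subject to the listed conditions, are exactly what an internal bicategory amounts to.
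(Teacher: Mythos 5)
Your proposal follows the paper's own route exactly: the paper presents this proposition without a separate proof, as the special case $C_0=(B,1_B,1_B)$ of the preceding proposition on homogeneous pseudocategories in $\G(\A)^{*}$ (itself deduced from Theorem 6.2 of \cite{NMF-2cells}), with the listed conditions and the morphisms $u,v,\a_i,m,\mu,\theta$ obtained by unwinding $\dom$, $\cod$ and the admissibility data into explicit $p_A$-formulas --- precisely your steps (1)--(3). Your dictionary (in particular $u=\dom(\rho)$, $v=\dom(\lambda)$, and condition \eqref{c.p1p1} as $\cod(\lambda)=1_{C_1}=\cod(\rho)$) is exactly the intended translation, and the remaining bookkeeping you flag is likewise all the paper leaves implicit.
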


As a final remark we observe that a monoidal category is simply a bicategory with one object. Hence, by taking $B$ in the statement of the previous proposition to be the terminal object we obtain a characterization for the internal monoidal categories in $\A$.

%\textbf{Acknowledgements:} Thanks are due to Andrea Montoli and Manuela Sobral for many helpful comments and suggestions on the text. This research was supported by the
%FCT post doctoral grant SFRH/BPD/43216/2008 at CMUC, by the FCT projects PTDC/\-MAT/120222/2010
%and PTDC/EME-CRO/120585/2010, and also by CDRSP and ESTG from the Polytechnic Institute of Leiria.

%\large{{\sc Nelson Martins-Ferreira} \\[0mm] Escola Superior de
%Tecnologia e Gest\~{a}o, \\
%Centro para o Desenvolvimento R\'{a}pido e Sustentado do Produto, \\
%Instituto Polit\'{e}cnico de Leiria, Leiria \\{\sc Portugal}\\[1mm] \medskip {\tt
%martins.ferreira@ipleiria.pt}}

\end{document}